   \newtheorem{theorem}{Theorem}[section]
   \newtheorem{lemma}[theorem]{Lemma}
   \newtheorem{corollary}[theorem]{Corollary}
   \newtheorem{definition}[theorem]{Definition}
   \newtheorem{remark}[theorem]{Remark}
\journal{Linear Algebra and Its Applications}
\begin{document}

\begin{frontmatter}

\title{Construction of Lorenz Cone with Invariant Cone Using Dikin Ellipsoid for Dynamical Systems
}
\author{Yunfei Song\footnote{Email: songyunfei1986@gmail.com}}
\address{Quant Strategies Group, BA Securities\\
\large\emph{July 11, 2021}}

\begin{abstract}
In this paper, some special Lorenz cones are constructed using Dikin ellipsoid and some hyperplane. We also study the structure of the constructed cones, especially the eigenvalues structure of the related matrix in the formula of elliposid. These novel Lorenz cones  which locate in positive orthant by its construction are potential candidates to design invariant cone for a given dynamical system.  It provides more flexibility for practitioner  to choose more cones in the application for system stability analysis.   

\end{abstract}

\begin{keyword}
Linear Systems, Invariant Set, Dikin Ellipsoid, Lorenz Cone.
\end{keyword}

\end{frontmatter}

\section{Introduction}
The history of the analysis of invariant properties for dynamical
system goes back 1890s, when mathematician Lyapunov firstly
introduced his theory  on ordinary differential equations. This theory was
later on named by Lyapunov theory or stability theory. This theory proves
that the stability of a dynamical system, i.e. all trajectories will
approach a fixed point as time is going to infinity, can be
transformed to analyze the properties of a function that is named
Lyapunov candidate function. Stability theory is related to the concept of invariant set. 
Blanchini \cite{Blanchini} provides an excellent survey paper about
invariance of dynamical system. Positively invariant set is an important concept that is widely used in 
many areas e.g., control theory, electronic systems, economics, etc. e.g., see \cite{Boyd, luen, shen}. Given a set and a dynamical system,
verifying whether the set is an invariant set of the given system is an interesting topic in this field. A general equivalent condition is given 
by Nagumo, e.g., \cite{nagu}. The explicit conditions for linear system and some common sets are derived by Hov\'{a}th, et \cite{bits1, bits2, song1, song2}. 
The discrete and continuous system are usually considered separately for invariant sets.  
Preserving the 
invariance from a continuous system to a discrete system  by using certain discretization methods is studied by Hov\'{a}th et \cite{song3}.   

In this paper, we construct some special Lorenz cone using Dikin ellipsoid and hyperplane, and the structures of the constructed cones are studied.  The novelty 
of this method is that linking the mathematical optimization tools to the invariant set. The  motivation of the construction is to design more potential invariant cone 
within positive orthant, which is usually a common requirement in practical application.

\emph{Notation and Conventions.} We use the following
notation and conventions to avoid unnecessary repetitions
\begin{itemize}
  \item The inertia of a matrix is denoted by inertia$\{Q\}=\{a,b,c\}$ that indicates the number
of positive, zero, and negative eigenvalues of $Q$, respectively.
  \item The basis in $\mathbb{R}^n$ is denoted by $e_1=\{1,0,...,0\}, e_2=\{0,1,...,0\},...,
  e_n=\{0,0,...,1\}$. And we let $e=(1,1,...,1).$
  \item The aim of using $x_{[k]}$ to indicate the discrete
state variable is to distinguish with the $k$-th coordinate, denoted
by $x_k$, of variable $x$.
\end{itemize}

\section{Preliminaries}
\subsection{Invariant Sets}
In this paper, the linear discrete and continuous 
systems are described as follows:
\begin{equation}\label{eqn:dy2}
x_{k+1}=A_dx_{k},
\end{equation}
\begin{equation}\label{eqn:dy1}
\dot{x}(t)=A_cx(t),
\end{equation}
where $x_{k}, x_{k+1}, x(t)\in\mathbb{R}^{n}$
are the state variables, and $A_d, A_c\in \mathbb{R}^{n\times n}$ are constant coefficient 
matrices.

Followed on the linear systems above, the invariant sets for the corresponding discrete and continuous forms are introduced. 

\begin{definition}\label{definv1}
A set $\mathcal{S}\in\mathbb{R}^n$ is called an \textbf{invariant
set} for
the discrete system (\ref{eqn:dy2}) if $x_{k}\in \mathcal{S}$ implies
$x_{k+1}\in \mathcal{S}$, for all $k\in \mathbb{N}.$
A set $\mathcal{S}\in\mathbb{R}^n$ is called an \textbf{invariant
set} for
the continuous system (\ref{eqn:dy1}) if $x(0)\in \mathcal{S}$ implies
that $x(t)\in\mathcal{S}$, for all $t\geq0$.
\end{definition}

In fact, the
set $\mathcal{S}$ in Definition \ref{definv1} is usually refereed to as
\emph{positively invariant set} in literatures, as we can see that it only considers the forward time or nonnegative time. Since we only consider positively
invariant set in this paper, we call it invariant set for
simplicity. 

\begin{definition}\label{definv123}
An operator $\mathcal{A}$ is called \textbf{invariant} on a set $\mathcal{S}\in\mathbb{R}^n$ if $\mathcal{A}\mathcal{S}\subset\mathcal{S}.$ 
\end{definition}

An immediate connection between Definition \ref{definv1} and \ref{definv123} is that a set $\mathcal{S}$ is an invariant
set for the linear discrete  system (\ref{eqn:dy2}) if and only if
$A_d$ is invariant on
$\mathcal{S}$;
$\mathcal{S}$ is an invariant
set for the linear continuous  system (\ref{eqn:dy1}) if and only if \footnote{Here recall that
$e^{At}=\sum_{k=1}^\infty\frac{(At)^k}{k!}$.} $e^{A_ct}$
 is invariant on
$\mathcal{S}$.

\subsection{Hyperplane,  Ellipsoid, Lorenz Cone, and Dikin Ellipsoid}
In this subsection, the definitions  and formulas of some common types in $\mathbb{R}^n$, namely, hyperplane,
ellipsoid, Lorenz cone, and Dikin ellipsoid are introduced. 

\begin{definition}\label{defhyp}
A \textbf{hyperplane}, denoted by $\mathcal{H}\in \mathbb{R}^n,$ is
represented as either
\begin{equation}\label{eqhyp1}
\mathcal{H}=\mathcal{H}(a,\alpha)=\{x\in \mathbb{R}^n\,|\,a^Tx=\alpha,
\alpha\in \mathbb{R}\},
\end{equation}
 or
equivalently
\begin{equation}\label{eqhyp2}
 \mathcal{H}=\mathcal{H}(x_0,H)=\{x\in \mathbb{R}^n\,|\,x=x_0+Hz, z\in \mathbb{R}^{n-1}\},
\end{equation}
 where $x_0$ is a point in $\mathcal{H}$ and $H$ consists of all  basis, denoted by $h_1,h_2,...,h_{n-1},$ of the
complementary space of $a$,  i.e., $H=[h_1,h_2,...,h_{n-1}]\in
\mathbb{R}^{n\times n-1}, a^TH=0, $ $ H^Ta=0,$ and
\emph{span}$\{a,H\}=\mathbb{R}^n.$
\end{definition}

The vector $a$ in (\ref{eqhyp1}) is called the \emph{normal vector} of the hyperplane $\mathcal{H}.$
The matrix $H$ in (\ref{eqhyp2}) is called the \emph{complementary matrix}  of the vector $a$. Moreover, if 
$h_1,h_2,...,h_{n-1}$ are mutually orthonormal, i.e., $h_i^Th_j=\delta_{ij}$, where $\delta_{ij}$ is the Kronecker delta, then we have $H^TH=I_{n-1},$ and we call
$H$ the \emph{orthonormal complementary matrix} of the vector $a$.
Formule (\ref{eqhyp1}) is straightforward and can be seen in many literatures. We will use Formula (\ref{eqhyp2}) in this paper since it  considers a hyperplane in the affine plane.

\begin{definition}\label{def4}
An \textbf{ellipsoid}, denoted by $\mathcal{E}\in\mathbb{R}^n, $ is represented as
\begin{equation}\label{eq16}
\mathcal{E}=\mathcal{E}(Q,p,\rho)=\{x\in \mathbb{R}^n |
x^TQx+2p^T+\rho\leq1\},
\end{equation}
where $Q\succ0,$ and $ \rho = p^TQ^{-1}p$. A \textbf{standard
ellipsoid}, denoted by $\mathcal{E}^*\in \mathbb{R}^n$,  is
represented as
\begin{equation}\label{eq17}
\mathcal{E}^*=\mathcal{E}(\tilde{Q},\textbf{0},0)=\{x\in
\mathbb{R}^n|a_1x_1^2+a_2x_2^2+...+a_nx_n^2\leq 1\}=\{x\in
\mathbb{R}^n|x^T\tilde{Q}x\leq1\}
\end{equation}\label{eq21}
where $\tilde{Q}=\emph{diag}\{a_1,a_2,...,a_n\}, $ with $ a_i>0$,
for $i=1,2,...,n$.
\end{definition}

\begin{definition}\label{def3}
A \textbf{Lorenz cone}, denoted by $\mathcal{C_L}\in \mathbb{R}^n,$ is represented
as
\begin{equation}\label{eq13}
\mathcal{C_L}=\mathcal{C_L}(Q,p,\rho)=\{x\in
\mathbb{R}^n|x^TQx+2p^Tx+\rho\leq0\},
\end{equation}
where $Q$ is a symmetric matrix with inertia$(Q)=\{n-1,0,1\},$ $p\in
\mathbb{R}^n,$ and $ \rho = p^TQ^{-1}p$. A \textbf{standard Lorenz
cone}, denoted by $\mathcal{C_L}^*\in \mathbb{R}^n,$ is represented
as
\begin{equation}\label{eq14}
\mathcal{C_L^*}=\{x\in \mathbb{R}^n|x_1^2+x_2^2+\cdots+x_{n-1}^2\leq
x_n^2,  x_n\geq0\} =\{x\in \mathbb{R}^n|x^T \tilde{I} x\leq0,
x^T\tilde{I} e_n\leq0\}
\end{equation}
where $\tilde{I}=\emph{diag}\{1,1,...,1,-1\}, $ and $
e_n=(0,...,0,1)^T.$
\end{definition}

\begin{remark}
The center of $\mathcal{E}$ in the form of (\ref{eq16}) is
$-Q^{-1}p.$ The vertex of $\mathcal{C_L}$ is $-Q^{-1}p$, and  the
axis of $\mathcal{C_L}$ in the form of (\ref{eq13}) is $\{x\in
\mathbb{R}^n|x=-Q^{-1}p+\alpha Pe_n \}$, where $\alpha \in
\mathbb{R}$.
\end{remark}

In fact, one can prove that each Lorenz cone can be mapped into 
a standard Lorenz cone by certain transformation, e.g., \cite{song1}. We notice that a Lorenz cone in the form of (\ref{eq13}) consists of
two branches, one of which is centrosymmetric to the other one with
respect to the vertex. A standard Lorenz cone $\mathcal{C_L^*} $ in
the form of (\ref{eq14}) is a convex set and a self-duel cone, i.e.,
the dual cone\footnote{A duel cone of a cone $\mathcal{C}$ is
defined as $\{y\in \mathbb{R}^n~|~y^Tx\geq 0, \text{ for all } x\in
\mathcal{C}\}.$} is coincidence with itself. Also, the formula of
$\mathcal{C_L^*}\cup (-\mathcal{C_L^*})$ is given as $\{x\in
\mathbb{R}^n|x^T \tilde{I} x\leq0\}.$

The relationships between general and standard ellipsoids and
between general and standard Lorenz cones are given in the following
lemma.

\begin{lemma}
There exists two nonsingular matrices $P, $ and $\tilde{P}$, such
that
\begin{equation}\label{eq18}
\tilde{P}^{-1}\mathcal{E}^*=P^{-1}\mathcal{E}+(QP)^{-1}p,
\end{equation}
where $\mathcal{E}$ and $\mathcal{E}^*$ are  defined as (\ref{eq16})
and (\ref{eq17}), respectively. There exists a nonsingular matrix
$\bar{P}$, such that
\begin{equation}\label{eq15}
\mathcal{C_L^*}\cup
(-\mathcal{C_L^*})=\bar{P}^{-1}\mathcal{C_L}+(Q\bar{P})^{-1}p,
\end{equation}
where $\mathcal{C_L}$ and $\mathcal{C_L^*}$ are given as
(\ref{eq13}) and (\ref{eq14}), respectively.
\end{lemma}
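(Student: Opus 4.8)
The plan is to establish both identities by completing the square and then diagonalizing the quadratic forms, exploiting that these cones and ellipsoids are defined by second-degree inequalities whose signature is prescribed. For the ellipsoid identity \eqref{eq18}, first I would complete the square in the defining inequality of $\mathcal{E}=\mathcal{E}(Q,p,\rho)$: since $\rho=p^TQ^{-1}p$, we have $x^TQx+2p^Tx+\rho=(x+Q^{-1}p)^T Q (x+Q^{-1}p)$, so after the affine shift $y=x+Q^{-1}p$ the set $\mathcal{E}$ becomes $\{y\mid y^TQy\le 1\}$ translated by $-Q^{-1}p$. Because $Q\succ0$, there is a nonsingular matrix $R$ with $R^TQR=\tilde Q=\mathrm{diag}\{a_1,\dots,a_n\}$ for any prescribed positive $a_i$ (or simply $R^TQR=I$ and then absorb the $a_i$); the linear change of variables $y=Rz$ then carries $\{y\mid y^TQy\le1\}$ onto $\mathcal{E}^*=\{z\mid z^T\tilde Q z\le1\}$. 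Collecting the translation and the two linear maps gives the required $P$ and $\tilde P$; one then reads off that $P$ is (a scalar multiple of) $R^{-1}$, and the additive term $(QP)^{-1}p$ is exactly the image of the center shift under the maps, which is a routine bookkeeping check.

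For the Lorenz cone identity \eqref{eq15}, I would proceed analogously but track the signature carefully. Completing the square, $x^TQx+2p^Tx+\rho=(x+Q^{-1}p)^TQ(x+Q^{-1}p)$ again (using $\rho=p^TQ^{-1}p$), so after shifting by the vertex $-Q^{-1}p$ the cone $\mathcal{C_L}$ becomes the homogeneous set $\{y\mid y^TQy\le0\}$. Since $\mathrm{inertia}(Q)=\{n-1,0,1\}$, Sylvester's law of inertia gives a nonsingular $S$ with $S^TQS=\tilde I=\mathrm{diag}\{1,\dots,1,-1\}$; the substitution $y=Sz$ then sends $\{y\mid y^TQy\le0\}$ onto $\{z\mid z^T\tilde I z\le0\}$, which by the remark in the excerpt is precisely $\mathcal{C_L^*}\cup(-\mathcal{C_L^*})$. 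Setting $\bar P=S^{-1}$ (up to the conventions in the statement) and verifying that the constant vector is $(Q\bar P)^{-1}p$ finishes this part; note it is the \emph{double} cone that appears because the single inequality $y^TQy\le0$ does not see the sign condition $x_n\ge0$ that singles out one branch.

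The steps, in order: (1) complete the square in each defining inequality using $\rho=p^TQ^{-1}p$; (2) perform the affine translation by the center $-Q^{-1}p$ (resp. vertex); (3) invoke a congruence ($R^TQR=\tilde Q$ in the positive-definite case, $S^TQS=\tilde I$ in the indefinite case) to linearly map the homogeneous quadratic form to its standard model; (4) compose the maps and verify the additive constants match $(QP)^{-1}p$ and $(Q\bar P)^{-1}p$ respectively; (5) for the cone, cite the excerpt's observation that $\{x\mid x^T\tilde I x\le0\}=\mathcal{C_L^*}\cup(-\mathcal{C_L^*})$. The main obstacle I anticipate is purely organizational rather than conceptual: matching the exact form of the additive terms as written in \eqref{eq18} and \eqref{eq15} — in particular untangling whether $P$ denotes the congruence matrix or its inverse, and confirming that $(QP)^{-1}p$ and $(Q\bar P)^{-1}p$ are the correct images of the translation vector under the chosen normalization. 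A secondary subtlety is that the congruence matrix is far from unique (one has freedom in the $a_i$ for the ellipsoid and an $O(n-1,1)$ worth of freedom for the cone), so the statement should be read as an existence claim, and I would make explicit one convenient choice (e.g. via the spectral decomposition $Q=U\Lambda U^T$, taking $R=U|\Lambda|^{-1/2}$) to avoid ambiguity.
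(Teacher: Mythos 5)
Your proposal is correct and follows essentially the same route as the paper: complete the square using $\rho=p^TQ^{-1}p$, shift by the center (resp.\ vertex) $-Q^{-1}p$, and reduce the quadratic form by a congruence built from the spectral decomposition $Q=U\Lambda U^T$ with $P=U|\Lambda|^{-1/2}$, then check that the translation term equals $(QP)^{-1}p$ via $P\tilde I P^T=Q^{-1}$. The paper likewise proves only the ellipsoid identity and declares the Lorenz-cone case analogous, which you carry out explicitly with Sylvester's law of inertia; that is a completeness improvement, not a different method.
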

\begin{proof} We only present the proof of (\ref{eq18}), the proof of (\ref{eq15})
is analogous.  Since $Q\succ0$, there exists an orthogonal matrix
$U$ consisting of all the eigenvectors $\{\lambda_i\}$ of $Q$, such
that $U^TQU=\text{diag}\{\lambda_1,...,\lambda_n\}$. Denoting
$Q_1=\text{diag}\{\frac{1}{\sqrt{\lambda_1}},...,\frac{1}{\sqrt{\lambda_n}}\}$,
we have  $Q_1^TU^TQUQ_1=\tilde{I}.$ We let $P=UQ_1$ and
$\tilde{P}=\text{diag}\{\sqrt{a_1},...,\sqrt{a_n}\}$, both of which
are nonsingular, then (\ref{eq16}) and (\ref{eq17}) can be
respectively rewritten as
\begin{equation*}
\mathcal{E}=\{x\in
\mathbb{R}^n~|~(P^{-1}x+\tilde{I}P^{T}p)^T(P^{-1}x+\tilde{I}P^{T}p)\leq1\},
 \text{ and } \mathcal{E}^*=\{x\in
\mathbb{R}^n~|~(\tilde{P}^{-1}x)^T(\tilde{P}^{-1}x)\leq1\}.
\end{equation*}
Noting that $P\tilde{I}P^{T}=Q^{-1}$ implies
$\tilde{I}P^{T}=(QP)^{-1}$, we deduce (\ref{eq18}) immediately.
\end{proof}

\begin{definition} \label{dikin}
\cite{bert, Terlaky}
A \textbf{Dikin Ellipsoid}, denoted by $\mathcal{E_D}\in
\mathbb{R}^n$,  is represented as
\begin{equation}\label{eq31}
\mathcal{E_D}=\Big\{x\in
\mathbb{R}^n\,|\,\sum_{i=1}^n\frac{(x_i-c_i)^2}{c_i^2}\leq1\Big\}=\{x\in
\mathbb{R}^n\,|\,(x-c)^TC^{-2}(x-c)\leq 1\},
\end{equation}
where $c=(c_1,c_2,...,c_n)^T$,
$C=\emph{diag}\{c_1,c_2,...,c_n\}$, and $c_i>0$, for $i=1,2,...,n.$
\end{definition}

The point $c$ is the center of Dikin elliposid according to (\ref{eq31}). In fact, the ellipsoid (\ref{eq31}) was introduced by Dikin and widely used in designing some mathematical optimization 
algorithms, e.g., affine scaling interior
point methods \cite{bert,Terlaky}. A common property of every Dikin ellipsoid is
that it is constantly in the positive octant of $\mathbb{R}^n$ including its boundary. This is a key property to design mathematical optimization algorithms. 

\begin{lemma} \cite{bert,Terlaky}
Assume the Dikin ellipsoid $\mathcal{E_D}$ is given as (\ref{eq31}) and let $x\in \mathcal{E_D}$, then $x\geq0$.
\end{lemma}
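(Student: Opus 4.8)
The plan is to show directly from the defining inequality that each coordinate of a point $x \in \mathcal{E_D}$ is nonnegative. Fix $x \in \mathcal{E_D}$ and fix an index $i \in \{1,2,\dots,n\}$. Since every summand $\frac{(x_j - c_j)^2}{c_j^2}$ is nonnegative, dropping all terms except the $i$-th only decreases the left-hand side, so the defining inequality $\sum_{j=1}^n \frac{(x_j-c_j)^2}{c_j^2} \le 1$ forces
\begin{equation*}
\frac{(x_i - c_i)^2}{c_i^2} \le 1.
\end{equation*}
Because $c_i > 0$ we may multiply through by $c_i^2 > 0$ and take square roots, obtaining $|x_i - c_i| \le c_i$, i.e. $-c_i \le x_i - c_i \le c_i$. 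The left inequality gives $x_i \ge 0$ (and in fact the right gives $x_i \le 2c_i$, though only nonnegativity is needed).

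Since $i$ was arbitrary, $x_i \ge 0$ for all $i$, which is exactly the assertion $x \ge 0$. That completes the argument.

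There is essentially no obstacle here: the only subtlety worth stating explicitly is that the reduction from the full sum to a single term is valid precisely because each term is a square divided by a positive quantity $c_i^2$, hence nonnegative, so discarding the other terms preserves the inequality direction. One could also phrase the conclusion geometrically — the Dikin ellipsoid is contained in the box $\prod_{i=1}^n [0, 2c_i]$, which lies in the nonnegative orthant — but the coordinatewise estimate above is the cleanest route and requires nothing beyond Definition \ref{dikin}.
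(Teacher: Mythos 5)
Your argument is correct and is essentially identical to the paper's proof: both bound the single term $\frac{(x_i-c_i)^2}{c_i^2}$ by the full sum, use $c_i>0$ to deduce $|x_i-c_i|\leq c_i$, and conclude $0\leq x_i\leq 2c_i$ for every coordinate. No gaps to report.
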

\begin{proof} For every $i$, we have $\frac{(x_i-c_i)^2}{c_i^2}\leq
\sum_{i=1}^n\frac{(x_i-c_i)^2}{c_i^2}\leq1$. This yields $-c_i\leq
x_i-c_i\leq c_i,$ i.e., $0\leq x_i\leq 2c_i. $
\end{proof}

Let us consider the hyperplane $\mathcal{H}$  given as (\ref{eqhyp1}) with the normal vector
$a$, and assume $\mathcal{H}$ intersects through the center of the Dikin ellipsoid
$\mathcal{E_D}$ given as (\ref{eq31}), then we have
\begin{equation}\label{eq112}
\mathcal{H}=\mathcal{H}(a,a^Tc)=\{x\in \mathbb{R}^n\,|\,a^Tx=a^Tc\},
\end{equation}
and the intersection $\mathcal{H}\cap \mathcal{E_D}$ is also an ellipsoid.

\begin{lemma}
Let a hyperplane $\mathcal{H}$ and a Dikin ellipsoid $\mathcal{E_D}$
be given as (\ref{eq112}) and (\ref{eq31}), respectively. Then
$\mathcal{H}\cap \mathcal{E_D}=\{x\in \mathbb{R}^n\,|\,x=c+Hz\}$, where
$ z\in \mathbb{R}^{n-1}$ satisfies
$z^TH^TC^{-2}Hz\leq1, $ and $H$ is a
complementary matrix of the vector $a.$
\end{lemma}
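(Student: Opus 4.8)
The plan is to use directly the affine parametrization of a hyperplane from Definition~\ref{defhyp} and substitute it into the defining inequality of the Dikin ellipsoid. First I would observe that $c$ itself lies on $\mathcal{H}$ in the form (\ref{eq112}), since trivially $a^Tc=a^Tc$; hence $c$ is an admissible choice for the base point $x_0$ in the representation (\ref{eqhyp2}). Because the columns $h_1,\dots,h_{n-1}$ of the complementary matrix $H$ form a basis of the orthogonal complement of $a$ and $\mathrm{span}\{a,H\}=\mathbb{R}^n$, the map $z\mapsto c+Hz$ is an affine bijection from $\mathbb{R}^{n-1}$ onto $\mathcal{H}$, so that $\mathcal{H}=\{c+Hz\,|\,z\in\mathbb{R}^{n-1}\}$.

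Next I would intersect this description with $\mathcal{E_D}$. A point $x=c+Hz$ lies in $\mathcal{E_D}$, as given by (\ref{eq31}), precisely when $(x-c)^TC^{-2}(x-c)\leq1$. Substituting $x-c=Hz$ turns this into $(Hz)^TC^{-2}(Hz)=z^TH^TC^{-2}Hz\leq1$. Combining the two descriptions yields $\mathcal{H}\cap\mathcal{E_D}=\{x=c+Hz\,|\,z^TH^TC^{-2}Hz\leq1\}$, which is exactly the claimed formula.

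Finally, to make the statement meaningful — and to justify the assertion preceding the lemma that $\mathcal{H}\cap\mathcal{E_D}$ is again an ellipsoid — I would note that $H^TC^{-2}H$ is symmetric positive definite: $C^{-2}\succ0$ because each $c_i>0$, and $H$ has full column rank $n-1$, so $z^TH^TC^{-2}Hz=\|C^{-1}Hz\|^2>0$ whenever $z\neq0$. Hence the admissible set for $z$ is a genuine $(n-1)$-dimensional solid ellipsoid centered at the origin, and $\mathcal{H}\cap\mathcal{E_D}$ is its affine image.

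There is no real obstacle in this argument; the only points requiring a word of care are the bijectivity of the parametrization $z\mapsto c+Hz$, which is immediate from the defining properties of the complementary matrix $H$ in Definition~\ref{defhyp}, and the positive definiteness of $H^TC^{-2}H$ invoked in the last paragraph. The computation itself is a one-line substitution.
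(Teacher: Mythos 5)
Your proof is correct and follows the same route as the paper: take $x_0=c$ in the parametrization (\ref{eqhyp2}) and substitute $x=c+Hz$ into (\ref{eq31}). The extra remarks on the bijectivity of $z\mapsto c+Hz$ and the positive definiteness of $H^TC^{-2}H$ are sound additions but not needed beyond what the paper's one-line argument already establishes.
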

\begin{proof}
According to the second formula of $\mathcal{H}$ given as
(\ref{eqhyp2}), and let $x_0=c,$ we have $x=c+Hz$.  
Substituting  $x=c+Hz$ into (\ref{eq31}), this lemma
is immediate.
\end{proof}

\begin{definition}
\cite{wilk} A matrix $D\in \mathbb{R}^{n\times n}$ is called an
\textbf{arrowhead matrix} if it has the following form
\begin{equation}\label{eq8}
D=\left[
  \begin{array}{cc}
    \alpha& p \\
    p^T & B \\
  \end{array}
\right],
\end{equation}
where $\alpha\in \mathbb{R},  p\in\mathbb{R}^{n-1},$ and $
B=\emph{diag}\{b_1,b_2,...,b_{n-1}\}.$  Here we assume $b_1\geq b_2\geq
...\geq b_{n-1}.$
\end{definition}

\begin{lemma}\label{lemma11}
\cite{wilk} The following properties of the arrowhead matrix $D$
given as (\ref{eq8}) are true:
\begin{enumerate}
  \item The characteristic polynomial of $D$ is
  \begin{equation}\label{eq11}
\det(\lambda
I-D)=(\lambda-\alpha)\prod_{k=1}^{n-1}(\lambda-b_k)-\sum_{j=1}^{n-1}|p_j|^2\prod_{k=1,k\neq
j}^{n-1}(\lambda-b_k).
\end{equation}
   \item The eigenvalues of $D$ are real and satisfying the following condition
\begin{equation}\label{eqn:arr}
\lambda_1\geq b_1\geq\lambda_2\geq b_2\geq \cdots\geq b_{n-1}\geq
\lambda_n.
\end{equation}
\end{enumerate}
\end{lemma}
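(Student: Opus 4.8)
The plan is to verify the two claims directly, starting with the characteristic polynomial and then deriving the interlacing inequalities from it. First I would compute $\det(\lambda I - D)$ by cofactor expansion. Writing $\lambda I - D$ in block form with top-left scalar $\lambda - \alpha$, top-right row $-p^T$, bottom-left column $-p$, and bottom-right block $\mathrm{diag}\{\lambda - b_1, \dots, \lambda - b_{n-1}\}$, I would expand along the first row (or use the Schur complement / block determinant formula). Either route gives
\[
\det(\lambda I - D) = (\lambda - \alpha)\prod_{k=1}^{n-1}(\lambda - b_k) - \sum_{j=1}^{n-1} |p_j|^2 \prod_{k=1,\, k\neq j}^{n-1}(\lambda - b_k),
\]
which is exactly \eqref{eq11}; the Schur complement approach handles the generic case $\lambda \neq b_k$ cleanly, and the polynomial identity then extends to all $\lambda$ by continuity. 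That $D$ has only real eigenvalues is immediate since $D$ is symmetric.

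For the interlacing property \eqref{eqn:arr}, I would reason from the sign of the characteristic polynomial $\chi(\lambda) := \det(\lambda I - D)$ evaluated at the $b_k$'s, in the case where the $b_k$ are distinct and all $p_j \neq 0$. At $\lambda = b_k$, every term in the sum vanishes except $j = k$, so $\chi(b_k) = -|p_k|^2 \prod_{m\neq k}(b_k - b_m)$. With the ordering $b_1 > b_2 > \cdots > b_{n-1}$, the sign of $\prod_{m\neq k}(b_k - b_m)$ is $(-1)^{k-1}$, hence $\mathrm{sign}\,\chi(b_k) = (-1)^k$. Since $\chi$ is monic of degree $n$, it is positive for large $\lambda$ and has sign $(-1)^n$ for very negative $\lambda$; combining this with the alternating signs at $b_1 > \cdots > b_{n-1}$, the intermediate value theorem forces a root in each of the open intervals $(b_1, \infty)$, $(b_{k+1}, b_k)$ for $k = 1, \dots, n-2$, and $(-\infty, b_{n-1})$. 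Counting, that is $n$ roots in $n$ disjoint intervals, so they are exactly the eigenvalues and satisfy $\lambda_1 > b_1 > \lambda_2 > b_2 > \cdots > b_{n-1} > \lambda_n$.

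Finally I would remove the genericity assumptions. If some $p_j = 0$ or some $b_k$'s coincide, the strict inequalities degenerate to the weak ones in \eqref{eqn:arr}: one can either note that in these cases a $b_k$ becomes an eigenvalue directly (the corresponding coordinate decouples, or the repeated $b_k$ contributes its own eigenvalue), or one can perturb $D$ slightly to $D_\varepsilon$ with distinct $b_k$'s and nonzero $p_j$'s, apply the strict interlacing, and pass to the limit $\varepsilon \to 0$ using continuity of eigenvalues. The main obstacle here is mostly bookkeeping — handling the degenerate configurations carefully so that the weak inequalities are justified in all cases — rather than any deep difficulty; the perturbation argument is the cleanest way to sidestep a tedious case analysis.
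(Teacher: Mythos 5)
Your proof is correct. Note that the paper itself gives no proof of this lemma---it is stated as a known result cited from the literature---so there is no in-paper argument to compare against; your write-up supplies a valid self-contained justification. Both halves check out: the cofactor/Schur-complement computation yields exactly \eqref{eq11}, and the sign evaluation $\chi(b_k)=-|p_k|^2\prod_{m\neq k}(b_k-b_m)$, combined with the monic leading behavior and the intermediate value theorem, correctly places one root in each of the $n$ intervals, giving strict interlacing in the generic case; the perturbation-plus-continuity argument legitimately relaxes this to the weak inequalities of \eqref{eqn:arr} when some $p_j=0$ or some $b_k$'s coincide. One remark: the interlacing part can be obtained in one line from the Cauchy interlacing theorem, since $B=\mathrm{diag}\{b_1,\dots,b_{n-1}\}$ is the principal submatrix of $D$ obtained by deleting the first row and column, and its eigenvalues are precisely the $b_k$'s; that route gives the weak inequalities directly, with no genericity assumptions and no case analysis, while your root-counting argument has the advantage of also showing when the inequalities are strict (all $p_j\neq 0$ and the $b_k$ distinct).
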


\section{Construction of Novel Lorenz Cones}

In this section, we will construct some novel standard Lorenz cones and derive the corresponding explicit formulas for some special cases.   In particular, these novel standard Lorenz cones are constructed
by using a Dikin ellipsoid as its base\footnote{A set $\mathcal{B}$ is refereed to as  a base of a cone $\mathcal{C}$ if for any $x\in \mathcal{C}$, there exists a $\hat{x}\in \mathcal{B}$, such that $x=\lambda \hat{x}$ for some $\lambda>0$.}, or using the intersection of a Dikin ellipsoid
and a hyperplane as its base. Since the Lorenz cones we considered are standard, i.e., the vertices are the origin,  we have that the cones are constantly in the positive octant in $\mathbb{R}^n$ including its boundary. Also, the properties of the constructed Lorenz cones, especially the structures of the eigenvalues of the matrices involved in the cone formulas, are studied. The motivation of these novel cones is that they are considered as
candidate invariant sets for dynamical systems  in the positive orthant.

\begin{definition}
\cite{Laub} Let $X\in
\mathbb{R}^{m\times n}$. The \textbf{vectorization} of $X$ , denoted by $\emph{vec}(X)$, is writing all columns
of $X$ into a single vector, i.e.,
$\emph{vec}(X)=(x_{11},...,x_{m1},x_{12},...,x_{m2},...,x_{1m},...,x_{mn})^T.$
\end{definition}

\begin{lemma}
Let $\{v_i\}_{i=1}^n$ be a basis of
$\mathbb{{R}}^n$. Then  $\emph{vec}(v_iv_j^T),  1\leq i, j\leq n$ is a
basis of $\mathbb{R}^{n^2}.$
\end{lemma}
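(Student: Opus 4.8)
The plan is to exploit a dimension count: there are exactly $n^2$ vectors $\emph{vec}(v_iv_j^T)$, $1\le i,j\le n$, and $\dim\mathbb{R}^{n^2}=n^2$, so it suffices to prove that these vectors are linearly independent (spanning would then follow automatically). To set this up I would collect the given basis into the matrix $V=[v_1,v_2,\ldots,v_n]\in\mathbb{R}^{n\times n}$, which is nonsingular precisely because $\{v_i\}_{i=1}^n$ is a basis of $\mathbb{R}^n$.

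The key algebraic observation is that a linear combination of the rank-one matrices $v_iv_j^T$ collapses to a single matrix product: for scalars $c_{ij}$ and $C=(c_{ij})\in\mathbb{R}^{n\times n}$,
\begin{equation*}
\sum_{i=1}^n\sum_{j=1}^n c_{ij}\,v_iv_j^T = VCV^T,
\end{equation*}
which one checks by comparing the $(k,\ell)$ entries, both equal to $\sum_{i,j}c_{ij}(v_i)_k(v_j)_\ell$. Since $\emph{vec}$ is a linear bijection from $\mathbb{R}^{n\times n}$ onto $\mathbb{R}^{n^2}$, a relation $\sum_{i,j}c_{ij}\,\emph{vec}(v_iv_j^T)=0$ in $\mathbb{R}^{n^2}$ is equivalent to $VCV^T=0$ in $\mathbb{R}^{n\times n}$. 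Multiplying on the left by $V^{-1}$ and on the right by $(V^T)^{-1}$ forces $C=0$, i.e.\ every $c_{ij}=0$. This gives linear independence, and the dimension count upgrades it to the statement that $\{\emph{vec}(v_iv_j^T)\}_{1\le i,j\le n}$ is a basis of $\mathbb{R}^{n^2}$.

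An equivalent and perhaps slicker route, which I would mention as an alternative, is to recognize that $\emph{vec}(v_iv_j^T)=v_j\otimes v_i$, so these $n^2$ vectors are, up to a reordering of the index pairs, exactly the columns of the Kronecker product $V\otimes V$; since $\det(V\otimes V)=(\det V)^{2n}\neq0$, that matrix is nonsingular and its columns form a basis. Either way there is no genuine obstacle: the only care needed is the routine bookkeeping that identifies the vanishing of the vector combination with the matrix equation $VCV^T=0$ (and, in the Kronecker version, matching the stacking order of $\emph{vec}$ with the block structure of $V\otimes V$). The nonsingularity of $V$ does all the work.
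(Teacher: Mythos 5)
Your proposal is correct and follows essentially the same route as the paper's own (commented-out) proof: both reduce the statement to a matrix identity by conjugating a vanishing linear combination with the nonsingular change-of-basis matrix (your $VCV^T=0$ is exactly the paper's $\sum_{i,j}\lambda_{ij}Pv_iv_j^TP^T=\sum_{i,j}\lambda_{ij}e_ie_j^T$ with $P=V^{-1}$), the only cosmetic difference being that you obtain spanning from the dimension count while the paper exhibits the representation $X=P\tilde{X}P^T$ explicitly. Your Kronecker-product aside ($\mathrm{vec}(v_iv_j^T)=v_j\otimes v_i$, columns of the nonsingular $V\otimes V$) is a valid shortcut and fits naturally with the paper's later use of $H^T\otimes H^T$ in Lemma \ref{lemma1}.
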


\begin{corollary}\label{cor1}
Let $a\in \mathbb{{R}}^n$, and $H=[h_1,h_2,...,h_{n-1}]\in
\mathbb{R}^{n\times(n-1)}$ be a complementary matrix of the vector $a$. Then
the following vectors are a basis of $\mathbb{R}^{n^2},$
\begin{equation}
\emph{vec}(aa^T),~\emph{vec}(ah_i^T), ~\emph{vec}(h_ia^T), \text{ and } \emph{vec}(h_ih_j^T), \text{ where }
1\leq i,j\leq n-1.
\end{equation}
\end{corollary}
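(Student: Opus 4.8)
The plan is to derive Corollary~\ref{cor1} directly from the preceding Lemma, which asserts that $\operatorname{vec}(v_iv_j^T)$, $1\le i,j\le n$, is a basis of $\mathbb{R}^{n^2}$ whenever $\{v_i\}_{i=1}^n$ is a basis of $\mathbb{R}^n$. The key observation is that the list of $n^2$ vectors appearing in the statement --- namely $\operatorname{vec}(aa^T)$, the $n-1$ vectors $\operatorname{vec}(ah_i^T)$, the $n-1$ vectors $\operatorname{vec}(h_ia^T)$, and the $(n-1)^2$ vectors $\operatorname{vec}(h_ih_j^T)$ --- is exactly $\{\operatorname{vec}(v_iv_j^T):1\le i,j\le n\}$ for the particular choice $v_1=a$, $v_k=h_{k-1}$ for $k=2,\dots,n$. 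So the first step is simply to verify that this enumeration accounts for all $n^2$ products and that the count $1+(n-1)+(n-1)+(n-1)^2=n^2$ is correct.

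Second, I would check that $\{a,h_1,\dots,h_{n-1}\}$ is indeed a basis of $\mathbb{R}^n$, so that the Lemma applies. This is precisely the content of Definition~\ref{defhyp}: a complementary matrix $H=[h_1,\dots,h_{n-1}]$ of $a$ is defined so that $a^TH=0$ and $\operatorname{span}\{a,H\}=\mathbb{R}^n$; the latter condition, together with the fact that there are exactly $n$ vectors in the list, forces linear independence and hence that $\{a,h_1,\dots,h_{n-1}\}$ is a basis. With the basis established, one invokes the Lemma with $v_1=a$ and $v_{k+1}=h_k$ to conclude that the $n^2$ vectors $\operatorname{vec}(v_iv_j^T)$ form a basis of $\mathbb{R}^{n^2}$, and by the reindexing of the first step this is exactly the claimed list.

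There is essentially no obstacle here --- the corollary is a direct specialization of the Lemma --- so the only thing to be careful about is the bookkeeping: making sure the four families $\operatorname{vec}(aa^T)$, $\operatorname{vec}(ah_i^T)$, $\operatorname{vec}(h_ia^T)$, $\operatorname{vec}(h_ih_j^T)$ are pairwise disjoint (which they are, since they arise from distinct index pairs $(i,j)$ in $\{1,\dots,n\}^2$ under the identification above) and that their union has the right cardinality. I would present the argument in two or three sentences: identify the basis $\{v_i\}=\{a,h_1,\dots,h_{n-1}\}$ of $\mathbb{R}^n$, note that the four listed families are precisely the reindexing of $\{v_iv_j^T\}_{1\le i,j\le n}$, and apply the Lemma.
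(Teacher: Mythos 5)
Your argument is correct and is exactly the intended one: the paper states this corollary without proof precisely because it is the immediate specialization of the preceding lemma to the basis $v_1=a$, $v_{k+1}=h_k$ ($k=1,\dots,n-1$), which is a basis of $\mathbb{R}^n$ by the definition of the complementary matrix ($a^TH=0$ and $\mathrm{span}\{a,H\}=\mathbb{R}^n$). Your reindexing and counting bookkeeping matches this, so there is nothing to add.
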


\begin{lemma}\label{lemma1}
Let $a\in \mathbb{{R}}^n$, and $H=[h_1,h_2,...,h_{n-1}]\in
\mathbb{R}^{n\times(n-1)}$ be a complementary matrix of the vector $a$. Assume
$H^TXH=\textbf{0}\in\mathbb{R}^{(n-1)\times (n-1)},$ where $X\in
\mathbb{R}^{n\times n},$ then there exist $\mu\in\mathbb{R},$ and
$z_1, z_2\in \mathbb{R}^{n-1}$, such that
\begin{equation}\label{lem1:eq0}
X=\mu aa^T+az_1^TH^T+Hz_2a^T.
\end{equation}
If $X$ is further assumed to be symmetric, then $z_1=z_2.$
\end{lemma}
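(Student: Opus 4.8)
The plan is to use Corollary~\ref{cor1}, which tells us that $\{\mathrm{vec}(aa^T),\mathrm{vec}(ah_i^T),\mathrm{vec}(h_ia^T),\mathrm{vec}(h_ih_j^T)\}$ is a basis of $\mathbb{R}^{n^2}$, equivalently that $\{aa^T, ah_i^T, h_ia^T, h_ih_j^T : 1\le i,j\le n-1\}$ is a basis of $\mathbb{R}^{n\times n}$. So any $X\in\mathbb{R}^{n\times n}$ can be written uniquely as
\begin{equation*}
X=\mu aa^T+\sum_{i=1}^{n-1}\beta_i\, ah_i^T+\sum_{i=1}^{n-1}\gamma_i\, h_ia^T+\sum_{i,j=1}^{n-1}\eta_{ij}\,h_ih_j^T
\end{equation*}
for scalars $\mu,\beta_i,\gamma_i,\eta_{ij}$. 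Collecting the middle sums into $z_1=(\beta_1,\dots,\beta_{n-1})^T$ and $z_2=(\gamma_1,\dots,\gamma_{n-1})^T$ gives $ah_i^T$ terms summing to $az_1^TH^T$ and $h_ia^T$ terms summing to $Hz_2a^T$, and the last double sum is $HZH^T$ where $Z=(\eta_{ij})\in\mathbb{R}^{(n-1)\times(n-1)}$. Thus $X=\mu aa^T+az_1^TH^T+Hz_2a^T+HZH^T$.

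Next I would impose the hypothesis $H^TXH=\mathbf{0}$. Since $a^TH=0$ and $H^Ta=0$, multiplying $X$ on the left by $H^T$ and on the right by $H$ kills every term involving a factor of $a$ (namely $\mu aa^T$, $az_1^TH^T$, and $Hz_2a^T$), leaving $H^TXH=H^TH\,Z\,H^TH$. Because $H$ has full column rank $n-1$, the Gram matrix $G=H^TH$ is nonsingular (indeed positive definite), so $H^TXH=GZG=\mathbf{0}$ forces $Z=\mathbf{0}$. Substituting back yields exactly \eqref{lem1:eq0}: $X=\mu aa^T+az_1^TH^T+Hz_2a^T$.

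For the symmetric case, I would compute $X^T=\mu aa^T+Hz_1a^T+az_2^TH^T$ and set $X^T=X$. Subtracting, $a(z_1-z_2)^TH^T+H(z_2-z_1)a^T=\mathbf{0}$, i.e. $a w^T H^T = H w a^T$ where $w=z_1-z_2$. Pairing this on the left with $a^T$ and on the right with $H$ (or simply invoking the linear independence of $\mathrm{vec}(ah_i^T)$ versus $\mathrm{vec}(h_ia^T)$ from Corollary~\ref{cor1}) shows the coefficients of the independent basis vectors $ah_i^T$ must each vanish, so $w=0$ and $z_1=z_2$. Alternatively, from uniqueness of the basis expansion of $X$: symmetry means the $ah_i^T$-coefficients equal the $h_ja^T$-coefficients, which is precisely $z_1=z_2$.

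The only mild subtlety is the step $GZG=\mathbf{0}\Rightarrow Z=\mathbf{0}$, which needs $G=H^TH$ invertible; this holds since the columns of $H$ are a basis of the complementary space of $a$ and hence linearly independent. If one wants to avoid even invoking positive-definiteness of $G$, the whole lemma follows purely formally from the uniqueness half of Corollary~\ref{cor1}: write $X$ in the stated basis, apply $H^T(\cdot)H$, note the only surviving coordinates are the $\eta_{ij}$ (in the basis $\{h_ih_j^T\}$ of $\mathbb{R}^{(n-1)\times(n-1)}$, which is a basis by the same corollary applied in dimension $n-1$ — or more simply because $H^TH$ is invertible), conclude all $\eta_{ij}=0$, and read off the result. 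I expect no real obstacle here; the main thing to get right is bookkeeping of which terms vanish under $H^T(\cdot)H$ and correctly identifying $Hz_2a^T$ rather than $az_2^TH^T$ in the non-symmetric statement.
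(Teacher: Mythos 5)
Your proof is correct, and it takes a mildly but genuinely different route from the paper's. The paper vectorizes the constraint, rewriting $H^TXH=\mathbf{0}$ as $(H^T\otimes H^T)\mathrm{vec}(X)=\mathbf{0}$, and then argues that $\mathrm{vec}(X)$ lies in $\ker(H^T\otimes H^T)$, which is spanned by $\mathrm{vec}(aa^T)$, $\mathrm{vec}(ah_i^T)$, $\mathrm{vec}(h_ia^T)$; you instead expand $X$ directly in the matrix basis of Corollary~\ref{cor1} as $X=\mu aa^T+az_1^TH^T+Hz_2a^T+HZH^T$ and kill the last block from $H^TXH=(H^TH)Z(H^TH)=\mathbf{0}$ using invertibility of the Gram matrix $G=H^TH$. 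Your version avoids Kronecker products altogether and, more importantly, makes explicit the step the paper leaves implicit: asserting that $\mathrm{vec}(X)$ is a combination of only those $2n-1$ kernel vectors requires knowing they exhaust the kernel (i.e., $\mathrm{rank}(H^T\otimes H^T)=(n-1)^2$), which is exactly the content of your observation $GZG=\mathbf{0}\Rightarrow Z=\mathbf{0}$. The symmetric case is handled the same way in both proofs (linear independence of the $ah_i^T$ and $h_ia^T$, or equivalently uniqueness of the basis coefficients). One small slip in your closing aside: the matrices $h_ih_j^T$ live in $\mathbb{R}^{n\times n}$, not $\mathbb{R}^{(n-1)\times(n-1)}$, so the parenthetical about a basis of $\mathbb{R}^{(n-1)\times(n-1)}$ is off; but your fallback justification via invertibility of $H^TH$ is the correct one, so the argument stands.
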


\begin{proof}
According to \cite{Laub},  the equation $H^TXH=\textbf{0}$ can be solved by the following equation
\begin{equation}\label{lem1:eq1} 
(H^T\otimes
H^T)\text{vec}(X)=\text{vec}(H^TXH)=\text{vec}(\textbf{0})\in\mathbb{R}^{(n-1)^2},
\end{equation} 
where $M\otimes N$ is the Kronecker product, e.g., \cite{Laub}.
According to (\ref{lem1:eq1}), we have that $\text{vec}(X)$ is in $\text{ker}(H^T\otimes H^T)$, i.e., the
kernel of $H^T\otimes H^T$.  We note that  $\text{vec}(h_ih_j^T),$ for $ i,j=1,...,n-1,$ are in the range space of 
$H^T\otimes H^T.$ Then,  according to Corollary \ref{cor1}, we have
that $\text{vec}(aa^T), \text{vec}(ah_i^T), $ and $\text{vec}(h_ia^T), $ for $i=1,...,
n-1$, are in  $\text{ker}(H^T\otimes H^T).$ Thus
$\text{vec}(X)$ can be represented as a linear combination of
$\text{vec}(aa^T),\text{vec}(ah_i^T), $ and $\text{vec}(h_ia^T).$
Then (\ref{lem1:eq0}) is immediate  by writing this linear
representation in a matrix form. If $X$ is a symmetric matrix, then we have $a(z_1-z_2)^TH^T=H(z_1-z_2)a^T$. By the linear independence of  $\text{vec}(ah_i^T), $ and $\text{vec}(h_ia^T),$ for $i=1,2,...,n-1,$ we have $z_1=z_2.$
\end{proof}

\begin{lemma}\label{lemma2}
Let a hyperplane $\mathcal{H}$ and a Dikin ellipsoid $\mathcal{E_D}$
be given as (\ref{eq112}) and (\ref{eq31}), respectively, and the base of 
a standard Lorenz cone $\mathcal{C_L}$ be
$\mathcal{H}\cap\mathcal{E_D}$.  Let $\mathcal{C_L}$ be represented as
$\mathcal{C_L}=\{x\,|\,x^TQx\leq0\}$. Then the following conditions
hold:
\begin{equation}\label{cond}
H^TDH=\beta H^TQH, ~~c^TQH=0, ~~c^TQc=-\beta,
\end{equation}
where $\beta$ is a positive number and $H$ is a  complementary
matrix of $a$.
\end{lemma}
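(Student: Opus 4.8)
\emph{Proof idea.} The plan is to pass to coordinates adapted to $\mathcal{H}$ and to the center $c$, to recognize both descriptions of $\mathcal{C_L}$ as zero sublevel sets of quadratic forms on $\mathbb{R}^n$, and then to use that two quadratic forms cutting out the same solid proper double cone are positive scalar multiples of each other. Throughout, $D:=C^{-2}$ denotes the matrix of the Dikin ellipsoid in \eqref{eq31}, and I tacitly assume $a^Tc\neq0$; if $a^Tc=0$ the hyperplane passes through the origin and the ellipsoid $\mathcal{H}\cap\mathcal{E_D}$, being centered at $c\neq0$, cannot be the base of a cone with vertex at the origin.

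First I would introduce adapted coordinates. Since $a^Tc\neq0$ we have $c\notin\mathrm{range}(H)$, so $c,h_1,\dots,h_{n-1}$ form a basis of $\mathbb{R}^n$ and every vector can be written $x=tc+Hw$ with $(t,w)\in\mathbb{R}\times\mathbb{R}^{n-1}$; moreover $S:=H^TDH\succ0$ because $C^{-2}\succ0$ and $H$ has full column rank. Using $a^TH=0$, a vector $x=tc+Hw$ with $t\neq0$ lies on the ray through $c+H(w/t)\in\mathcal{H}$, which belongs to $\mathcal{E_D}$ exactly when $(w/t)^TS(w/t)\le1$, i.e. when $w^TSw\le t^2$. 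Since $\mathcal{H}\cap\mathcal{E_D}$ is the base of $\mathcal{C_L}$ and the vertex is the origin (and checking the vertex $t=w=0$ directly), this yields $\mathcal{C_L}=\{(t,w):q_1(t,w)\le0\}$ with $q_1(t,w):=w^TSw-t^2$. On the other hand, substituting $x=tc+Hw$ into $x^TQx$ and using $Q^T=Q$ gives $\mathcal{C_L}=\{(t,w):q_2(t,w)\le0\}$ with
\begin{equation*}
q_2(t,w):=t^2\,c^TQc+2t\,(c^TQH)w+w^TH^TQHw .
\end{equation*}

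Next I would show $q_1$ and $q_2$ are proportional. Since $S\succ0$, the form $q_1$ has inertia $\{n-1,0,1\}$, so $\{q_1\le0\}$ is a solid double cone, neither a halfspace nor all of $\mathbb{R}^n$, and its topological boundary is exactly $\{q_1=0\}$ (the gradient of $q_1$ does not vanish off the origin, and the vertex is not an interior point); hence the same holds for $q_2$, and so $\{q_1=0\}=\{q_2=0\}$ and $\{q_1<0\}=\{q_2<0\}$. Choosing $(t_0,w_0)$ with $q_1(t_0,w_0)<0$ and setting $\beta:=q_2(t_0,w_0)/q_1(t_0,w_0)>0$ (a quotient of two negatives), the form $r:=q_2-\beta q_1$ vanishes on all of $\{q_1=0\}$. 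After an orthogonal change of the $w$-variables diagonalizing $S$ we may take $q_1=|w|^2-t^2$; writing $r=\alpha t^2+2t\,b^Tw+w^TRw$ and evaluating $r$ at $t=\pm|w|$ forces $b=0$ and $R=-\alpha I$, so $r=-\alpha(|w|^2-t^2)$ is a scalar multiple of $q_1$, and since $r(t_0,w_0)=0$ with $q_1(t_0,w_0)\neq0$ that scalar is $0$. Therefore $q_2=\beta q_1$. I expect this to be the main obstacle --- the rest is bookkeeping --- the delicate point being to justify that equality of the two closed cones forces equality of their null cones before the elementary linear algebra can be used.

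Finally I would read off the conclusion: matching in $q_2=\beta q_1$ the coefficients of $t^2$, of the cross terms $t\,w_k$, and of $w_iw_j$ gives respectively $c^TQc=-\beta$, $c^TQH=\textbf{0}$, and $H^TQH=\beta S=\beta H^TDH$, which are the relations \eqref{cond} (with $D=C^{-2}$; the proportionality constant $\beta$ is of course tied to the chosen scaling of $Q$). That $\beta>0$ is also immediate from the fact that $c$, the center of the base $\mathcal{H}\cap\mathcal{E_D}$, lies in the interior of $\mathcal{C_L}$, whence $c^TQc<0$.
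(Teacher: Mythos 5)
Your argument is sound and reaches the right conclusion, but it is a homogenized and considerably more detailed version of what the paper does. The paper's proof is one line: parametrize the hyperplane as $x=c+Hz$, substitute into (\ref{eq31}) and into $x^TQx\leq0$ to get $z^TH^TDHz-1\leq0$ and $z^TH^TQHz+2c^TQHz+c^TQc\leq0$, and declare the conclusion ``immediate by comparing'' the two inequalities, i.e.\ it silently uses that two quadratics in $z$ cutting out the same solid ellipsoid have proportional coefficient data. You instead work in the full space with coordinates $x=tc+Hw$, write the double cone twice as a zero sublevel set of homogeneous forms $q_1,q_2$, and actually prove the rigidity step (equal solid double cones force proportional forms) via the boundary-equals-null-cone observation and evaluation at $t=\pm|w|$. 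What your route buys is exactly the justification the paper omits; the cost is the extra bookkeeping, plus two small points worth tightening. First, the phrase ``hence the same holds for $q_2$'' is the only soft spot: what you actually need is $\{q_1=0\}\subseteq\{q_2=0\}$, which already follows from your boundary argument (boundary points of $\{q_2\leq0\}$ automatically satisfy $q_2=0$), and the strict inequality $q_2(t_0,w_0)<0$ needed for $\beta>0$ requires a word (a zero of $q_2$ at an interior point would be a local maximum, forcing a kernel vector of the form and hence a translation invariance the double cone does not have) --- or can simply be bypassed by first obtaining proportionality with some real $\beta$ and then invoking $c^TQc<0$, as you do at the end. Second, your coefficient matching gives $H^TQH=\beta H^TDH$ together with $c^TQc=-\beta$, whereas (\ref{cond}) is written as $H^TDH=\beta H^TQH$ with $c^TQc=-\beta$; these coincide only after replacing $\beta$ by $1/\beta$, equivalently under the normalization $\beta=1$ that the paper adopts at the start of the proof of Theorem \ref{themmain}, so your version is in fact the internally consistent scaling and you should state that explicitly rather than identifying it verbatim with (\ref{cond}). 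Neither point is a genuine gap; both are easily repaired within your framework.
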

\begin{proof} We use the representation 
$\mathcal{H}=\mathcal{H}(c,H)=\{x\,|\,x=c+Hz\}$. Substituting $x=c+Hz$
into (\ref{eq31}) and $\mathcal{C_L}=\{x\,|\,x^TQx\leq0\}$,
respectively, we have
\begin{equation}\label{ineq}
z^TH^TDHz-1\leq0 \text{ ~ and  ~} z^TH^TQHz+2c^TQHz+c^TQc\leq0.
\end{equation}
The lemma is immediate by comparing the two inequalities in (\ref{ineq}).
\end{proof}

\begin{theorem}\label{themmain}
Assume the conditions in (\ref{cond}) hold, then
\begin{equation}\label{eq6}
\gamma
Q=D-\frac{1+c^T\tilde{D}c}{(c^Ta)^2}aa^T-\frac{1}{c^Ta}(ac^TD\tilde{H}+\tilde{H}Dca^T),
\end{equation}
where $\gamma$ is a positive number,
$\tilde{D}=D-(D\tilde{H}+\tilde{H}D)$, and
$\tilde{H}=H(H^TH)^{-1}H^T$.
\end{theorem}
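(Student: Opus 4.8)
The idea is to solve the three conditions in (\ref{cond}) explicitly for the symmetric matrix $Q$. Lemma~\ref{lemma1} describes all matrices compatible with the first condition; the second condition then determines the vector parameter it produces, and the third condition determines the remaining scalar parameter, leaving $\beta Q$ equal to the right-hand side of (\ref{eq6}), so that writing $\gamma=\beta$ gives the claim.

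First I would record the elementary facts about $\tilde{H}=H(H^TH)^{-1}H^T$: it is symmetric and idempotent, it satisfies $a^T\tilde{H}=\mathbf{0}$ and $\tilde{H}a=\mathbf{0}$ (because $a^TH=\mathbf{0}$), and it is the orthogonal projector onto the column space of $H$. Then I would rewrite the first condition of (\ref{cond}) as $H^T(D-\beta Q)H=\mathbf{0}$. Since $D-\beta Q$ is symmetric, Lemma~\ref{lemma1} yields a scalar $\mu$ and a vector $z\in\mathbb{R}^{n-1}$ with
\begin{equation*}
\beta Q=D-\mu aa^T-az^TH^T-Hza^T.
\end{equation*}
This is the structural heart of the argument; the rest is the determination of $z$ and $\mu$.

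Next I would impose the second condition $c^TQH=0$. Multiplying the displayed identity on the left by $c^T$ and on the right by $H$ and using $a^TH=\mathbf{0}$ annihilates the $aa^T$ and $Hza^T$ contributions and leaves $0=c^TDH-(c^Ta)\,z^TH^TH$. Assuming $c^Ta\neq0$ --- which is automatic for any admissible construction, since the hyperplane $\mathcal{H}(a,a^Tc)$ must miss the vertex of the cone, i.e.\ the origin --- this gives $z=\tfrac{1}{c^Ta}(H^TH)^{-1}H^TDc$, so that $az^TH^T=\tfrac{1}{c^Ta}ac^TD\tilde{H}$ and $Hza^T=\tfrac{1}{c^Ta}\tilde{H}Dca^T$. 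Substituting these back recovers every term of (\ref{eq6}) except the coefficient of $aa^T$, which is still the unknown $\mu$.

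Finally I would use the third condition $c^TQc=-\beta$: multiplying the current expression for $\beta Q$ on the left by $c^T$ and on the right by $c$ gives $-\beta^2=c^TDc-\mu(c^Ta)^2-\bigl(c^TD\tilde{H}c+c^T\tilde{H}Dc\bigr)$, and the identity $\tilde{D}=D-(D\tilde{H}+\tilde{H}D)$ collapses the bracketed term to $c^TDc-c^T\tilde{D}c$, whence $\mu(c^Ta)^2=\beta^2+c^T\tilde{D}c$. Taking $\gamma=\beta>0$ then delivers (\ref{eq6}); the numerator $1+c^T\tilde{D}c$ as displayed corresponds to $\beta=1$, the value attached to the cone actually constructed in Lemma~\ref{lemma2}, since its section $z^TH^TDHz\le\beta^2$ along $\mathcal{H}$ must coincide with the Dikin section $z^TH^TDHz\le1$. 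The only genuine obstacle I foresee is the bookkeeping in this last step --- carrying the two cross terms $ac^TD\tilde{H}$ and $\tilde{H}Dca^T$ through the quadratic form in $c$ and collapsing them via the $\tilde{D}$-identity to the precise constant --- and, as a sanity check, I would verify that the right-hand side of (\ref{eq6}) is manifestly symmetric, using $\tilde{H}Dca^T=(ac^TD\tilde{H})^T$.
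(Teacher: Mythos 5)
Your proposal is correct and follows essentially the same route as the paper: apply Lemma~\ref{lemma1} to the first condition in (\ref{cond}) to parametrize $Q$ (the paper writes $Q=D+\mu aa^T+az^TH^T+Hza^T$, your signs are flipped but equivalent), solve for $z$ from $c^TQH=0$, solve for $\mu$ from $c^TQc=-\beta$, and substitute back. The only difference is cosmetic: the paper simply sets $\beta=1$ ``for simplicity,'' whereas you carry $\beta$ through and then justify the normalization via the base coinciding with the Dikin section, which is a slightly more careful account of the same step.
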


\begin{proof}
Since $\mathcal{C_L}$ is defined as
$\{x\,|\,x^TQx\leq0\}$, we  let $\beta=1$ in (\ref{cond}) for
simplicity. According to Lemma \ref{lemma1} and the first condition
in (\ref{cond}), there exist $\mu\in\mathbb{R}$ and $ z\in
\mathbb{R}^{n-1},$ such that
\begin{equation}\label{eq1}
Q=D+\mu aa^T+az^TH^T+Hza^T.
\end{equation}
If we substitute (\ref{eq1}) into the second condition in
(\ref{cond}) and note that $H^Ta=0$, $a^TH=0$, and $c^Ta=\alpha$,
then we have
\begin{equation}\label{eq2}
z=-\frac{1}{\alpha}(H^TH)^{-1}H^TDc.
\end{equation}
 If we
substitute (\ref{eq1}) and (\ref{eq2}) into the third condition in
(\ref{cond}), then we have
\begin{equation}\label{eq3}
\mu=-\frac{1}{\alpha^2}(1+c^TDc-c^T(D\tilde{H}+\tilde{H}D)c).
\end{equation}
where $\tilde{H}=H(H^TH)^{-1}H^T.$

We then substitute (\ref{eq2}) and (\ref{eq3}) into (\ref{eq1}), and this theorem is
immediate.
\end{proof}

It is interesting to note that (\ref{eq2}) is the same as the solution of
$Hz=-\frac{1}{\alpha}Dc$ solved by the least squares method. Since
$\tilde{H}$ is a symmetric matrix, all the eigenvalues of
$\tilde{H}$ are real numbers. Note that $\tilde{H}^k=\tilde{H}$
holds for any positive integer $k,$ thus the eigenvalues of
$\tilde{H}$ are either $0$ or $1$ by choosing $k=2$. Also, according
to Sylvester's law of inertia \cite{horn}, we have
inertia$\{\tilde{H}\}=\{n-1,1,0\} $ because of
$H^T\tilde{H}H=I_{n-1}.$ Thus, the matrix $\tilde{H}$ has
eigenvalues 1 with multiplier $n-1$ and 0 with multiplier 1.

\begin{remark}
The vectors $h_1,h_2,...,h_{n-1}$ in Lemma \ref{lemma1} are not
necessarily orthonormal. However, if we use the orthonormal basis,
i.e., $H^TH=I_{n-1},$ the previous computation may be simplified.
\end{remark}

We now consider the  Lorenz cones generated by the origin and
the intersection between Dikin ellipsoids and some special hyperplanes.

\subsection{ {The hyperplane is $\mathcal{H}_i=\mathcal{H}(e_i,
c_i)=\{x\,|\,e_i^Tx=c_i\}.$}}

\begin{theorem}
Let a Dikin ellipsoid $\mathcal{E_D}$ be given as
(\ref{eq31}), and a hyperplane be $\mathcal{H}_i=\mathcal{H}(e_i,
c_i)$, and the base of the standard Lorenz cone $\mathcal{C_L}$ be $\mathcal{H}_i\cap\mathcal{E_D}$. Let $\mathcal{C_L}$ be represented as
$\mathcal{C_L}=\{x\,|\,x^TQ_ix\leq0\}$. Then
\begin{equation}\label{eq7}
Q_i=D+\frac{n-3}{c_i^2}E_{ii}+\sum_{j=1,j\neq
i}^n\frac{-1}{c_ic_j}(E_{ij}+E_{ji}),
\end{equation}
where $E_{ij}$ denotes an $n\times n$  matrix that has $ij$-th entry 1 and other
entries 0.
\end{theorem}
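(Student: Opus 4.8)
The plan is to specialize the general formula (\ref{eq6}) from Theorem \ref{themmain} to the case where the normal vector of the hyperplane is $a = e_i$ and the constant is $\alpha = c_i^T e_i = c_i$, and where the matrix $D = C^{-2} = \text{diag}\{c_1^{-2},\dots,c_n^{-2}\}$ is the matrix of the Dikin ellipsoid. So first I would record the relevant simplifications: with $a = e_i$, the complementary matrix $H$ can be taken to consist of the standard basis vectors $\{e_j : j \neq i\}$, which are already orthonormal, so $H^TH = I_{n-1}$ and $\tilde{H} = H(H^TH)^{-1}H^T = HH^T = I_n - E_{ii}$. Then $D\tilde{H} + \tilde{H}D = D(I - E_{ii}) + (I - E_{ii})D = 2D - 2 D E_{ii}$ (using that $D$ is diagonal so $D E_{ii} = E_{ii} D = c_i^{-2} E_{ii}$), hence $\tilde{D} = D - (D\tilde{H} + \tilde{H}D) = -D + 2 c_i^{-2} E_{ii}$.

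Next I would compute the three scalar/vector ingredients appearing in (\ref{eq6}). We have $c^T a = c^T e_i = c_i$, so $(c^Ta)^2 = c_i^2$. For the quadratic term, $c^T \tilde{D} c = -c^T D c + 2 c_i^{-2} (e_i^T c)^2 = -\sum_{k} c_k^2 \cdot c_k^{-2} + 2 c_i^{-2} c_i^2 = -n + 2 = -(n-2)$, so $1 + c^T\tilde{D}c = 1 - (n-2) = -(n-3)$, and therefore $-\frac{1+c^T\tilde{D}c}{(c^Ta)^2} aa^T = \frac{n-3}{c_i^2} E_{ii}$, which already matches the middle term of (\ref{eq7}). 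For the last (cross) term I need $c^T D \tilde{H} = c^T D (I - E_{ii})$; since $Dc$ has $k$-th entry $c_k^{-1}$ (as $D c = C^{-2} c = C^{-1} e$), the vector $\tilde{H} D c = (I - E_{ii}) Dc$ equals $Dc$ with its $i$-th coordinate zeroed out, i.e. $\sum_{j \neq i} c_j^{-1} e_j$. Thus $\tilde{H} D c\, a^T = \big(\sum_{j\neq i} c_j^{-1} e_j\big) e_i^T = \sum_{j\neq i} c_j^{-1} E_{ji}$, and $a c^T D \tilde{H} = e_i \big(\sum_{j\neq i} c_j^{-1} e_j^T\big) = \sum_{j\neq i} c_j^{-1} E_{ij}$. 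Dividing by $c^Ta = c_i$ and negating gives $-\frac{1}{c_i} \sum_{j\neq i} \frac{1}{c_j}(E_{ij} + E_{ji})$, matching the last term of (\ref{eq7}). Assembling the pieces and absorbing the positive constant $\gamma$ (which only rescales $Q$ and does not change the cone) yields (\ref{eq7}).

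The main thing to be careful about — rather than a genuine obstacle — is the bookkeeping around the projector $\tilde{H}$ and the orthonormality of the chosen $H$: one must justify that taking $H = [e_1,\dots,\widehat{e_i},\dots,e_n]$ is a legitimate choice of complementary matrix for $a = e_i$ (it satisfies $a^T H = 0$, $H^T a = 0$, and $\text{span}\{a,H\} = \mathbb{R}^n$), and that the cone $\{x : x^T Q_i x \le 0\}$ is independent of the positive scalar $\gamma$, so that the normalization $\gamma = 1$ in the statement is harmless. A secondary check is verifying the conditions (\ref{cond}) of Lemma \ref{lemma2} actually hold in this setting with $D = C^{-2}$ and $\beta = 1$ after the appropriate rescaling, so that Theorem \ref{themmain} applies; this is really just re-deriving that $\mathcal{H}_i \cap \mathcal{E_D}$ is an ellipsoid (which was already noted after (\ref{eq112})) and that it serves as the base of a standard Lorenz cone. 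Once those points are settled, the proof is a direct substitution with no hard estimates.
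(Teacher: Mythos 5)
Your proposal is correct and follows essentially the same route as the paper: both specialize the general formula (\ref{eq6}) of Theorem \ref{themmain} with $a=e_i$, $H=[e_1,\dots,e_{i-1},e_{i+1},\dots,e_n]$, and $\tilde{H}=HH^T=I_n-E_{ii}$, and then substitute the resulting quantities ($\alpha=c_i$, $aa^T=E_{ii}$, $D\tilde{H}=\tilde{H}D=D-c_i^{-2}E_{ii}$, etc.). You simply carry the substitution out in more explicit detail than the paper, which states it as ``immediate.''
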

\begin{proof}
Note that the normal vector of $\mathcal{H}_i$ is $e_i$,  thus the
complementary matrix of $e_i$ can be chosen as
$H=[e_1,...,e_{i-1},e_{i+1},...,e_n].$ Then we have
$\tilde{H}=HH^T=I_n-E_{ii}.$ Thus, we have
\begin{equation}\label{eq512}
\alpha = c_i, ~aa^T=E_{ii},~ ac^T=\sum_{j=1}^nc_jE_{ij},~
ca^T=\sum_{j=1}^nc_jE_{ji}, \text{ and }D\tilde{H}=\tilde{H}D=D-DE_{ii}.
\end{equation}
Substituting (\ref{eq512}) into (\ref{eq6}), the lemma is immediate.
\end{proof}

For example, let $i=1$ in (\ref{eq7}), we have that  
$Q_1$ is explicitly given as follows:
\begin{equation}\label{eqq1}
Q_1=\left[
    \begin{array}{cccc}
      \frac{n-2}{c_1^2} & -\frac{1}{c_1c_2} & \ldots & -\frac{1}{c_1c_n} \\
      -\frac{1}{c_1c_2} & \frac{1}{c_2^2} &  &  \\
      \vdots &  & \ddots &  \\
      -\frac{1}{c_1c_n} &  &  & \frac{1}{c_n^2} \\
    \end{array}
  \right].
\end{equation}

Note that $Q_1$ is an arrowhead matrix. For any $Q_i$  given as (\ref{eq7}), by certain row and column transformation, it can be represented as an arrowhead matrix. In fact, the matrix $Q_i$ given as (\ref{eq7}) has many interesting properties.

\begin{lemma}\label{lemma12}
For every $Q_i$ given as (\ref{eq7}), we have $\det(Q_i)=-\prod_{k=1}^n\frac{1}{c_k^2}$.
\end{lemma}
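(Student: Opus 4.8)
The plan is to compute $\det(Q_i)$ directly from the explicit arrowhead form of $Q_i$. By the row/column permutation remarked after \eqref{eqq1}, it suffices to treat $i=1$, since a symmetric permutation of rows and columns does not change the determinant (it is conjugation by a permutation matrix, whose determinant squares to $1$); so I would work with $Q_1$ as displayed in \eqref{eqq1}. That matrix is exactly an arrowhead matrix with $\alpha=\frac{n-2}{c_1^2}$, $p_j = -\frac{1}{c_1 c_{j+1}}$ for $j=1,\dots,n-1$, and $B=\mathrm{diag}\{\tfrac{1}{c_2^2},\dots,\tfrac{1}{c_n^2}\}$.

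The main step is to evaluate $\det(Q_1)$ via the characteristic-polynomial formula \eqref{eq11} of Lemma \ref{lemma11}, applied at $\lambda=0$. Since $\det(Q_1) = (-1)^n \det(-Q_1) = (-1)^n \cdot \bigl[\det(\lambda I - Q_1)\bigr]_{\lambda=0}$, formula \eqref{eq11} gives
\begin{equation*}
\det(\lambda I - Q_1)\big|_{\lambda=0} = (-\alpha)\prod_{k=1}^{n-1}(-b_k) - \sum_{j=1}^{n-1}|p_j|^2\prod_{k=1,k\neq j}^{n-1}(-b_k),
\end{equation*}
where $b_k = \tfrac{1}{c_{k+1}^2}$. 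The first term is $(-1)^n \alpha \prod_{k=1}^{n-1} b_k = (-1)^n \frac{n-2}{c_1^2}\prod_{k=2}^n \frac{1}{c_k^2}$. In the $j$-th summand, $|p_j|^2 = \frac{1}{c_1^2 c_{j+1}^2}$, and $\prod_{k\neq j}(-b_k) = (-1)^{n-2}\prod_{k\neq j} \frac{1}{c_{k+1}^2}$; multiplying these, the factor $\frac{1}{c_{j+1}^2}$ is supplied, so each summand equals $(-1)^{n-2}\frac{1}{c_1^2}\prod_{k=2}^n\frac{1}{c_k^2}$, independent of $j$. Summing over the $n-1$ values of $j$ and assembling,
\begin{equation*}
\det(\lambda I - Q_1)\big|_{\lambda=0} = (-1)^n\Bigl(\frac{n-2}{c_1^2} - \frac{n-1}{c_1^2}\Bigr)\prod_{k=2}^n\frac{1}{c_k^2} = (-1)^n\cdot\frac{-1}{c_1^2}\prod_{k=2}^n\frac{1}{c_k^2} = (-1)^{n+1}\prod_{k=1}^n\frac{1}{c_k^2}.
\end{equation*}
Multiplying by $(-1)^n$ yields $\det(Q_1) = -\prod_{k=1}^n \frac{1}{c_k^2}$, as claimed.

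There is no serious obstacle here; the only point demanding care is bookkeeping of the sign powers $(-1)^n$ coming from substituting $\lambda=0$ into a degree-$n$ polynomial and from the shift between $\det(Q_1)$ and $\det(-Q_1)$, together with correctly matching the index shift $b_k \leftrightarrow c_{k+1}$. An alternative, if one prefers to avoid Lemma \ref{lemma11}, is a direct cofactor expansion: use $n-1$ row operations to clear the first column (subtracting $c_1 c_{j+1}^{-1}$-scaled copies of row $j+1$... actually, multiply-and-subtract suitable multiples of the diagonal rows $2,\dots,n$ from row $1$) to reduce $Q_1$ to a matrix whose $(1,1)$ entry becomes $\frac{n-2}{c_1^2} - \frac{n-1}{c_1^2} = -\frac{1}{c_1^2}$ while the rest becomes upper-triangular with diagonal $\frac{1}{c_2^2},\dots,\frac{1}{c_n^2}$; the determinant is then the product of the diagonal, giving the same answer. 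I would present the Lemma \ref{lemma11} route as the primary argument and perhaps mention the elementary-operations route as a remark.
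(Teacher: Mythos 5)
Your proposal is correct and follows essentially the same route as the paper: evaluate the arrowhead characteristic polynomial \eqref{eq11} at $\lambda=0$ to get $\det(-Q_1)$, simplify the sum of the $n-1$ identical terms, and convert via $\det(Q_1)=(-1)^n\det(-Q_1)$. Your added remarks (the permutation argument reducing $Q_i$ to $Q_1$ and the alternative cofactor/row-operation route) are fine but not needed beyond what the paper does.
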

\begin{proof} We only consider  $Q_1$ given as (\ref{eqq1}). By choosing $\lambda=0$ and $A=Q_1$ in
(\ref{eq11}), we have
\begin{equation*}
\begin{split}
\det(-Q_1)
&=-\frac{n-2}{c_1^2}\prod_{k=2}^n\frac{-1}{c_k^2}-\sum_{j=2}^n\frac{1}{c_1^2c_j^2}\prod_{k=2,k\neq
j}^n\frac{-1}{c_k^2}
=(-1)^n\frac{n-2}{c_1^2}\prod_{k=2}^n\frac{1}{c_k^2}-(-1)^n\frac{n-1}{c_1^2}\prod_{k=2}^n\frac{1}{c_k^2}\\
&=(-1)^{n-1}\frac{1}{c_1^2}\prod_{k=2}^n\frac{1}{c_k^2}
=(-1)^{n-1}\prod_{k=1}^n\frac{1}{c_k^2}
\end{split}
\end{equation*}
Noting that that $\det(Q_1)=(-1)^n\det(-Q_1)$, this lemma is
immediate.
\end{proof}

\begin{lemma}
For every $Q_i$ given as (\ref{eq7}),  we have $\emph{inertia}(Q_i)=\{n-1,0,1\}$.
\end{lemma}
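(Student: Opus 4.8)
The plan is to reduce the statement for a general $Q_i$ to the arrowhead matrix $Q_1$ of (\ref{eqq1}), and then combine the determinant formula of Lemma \ref{lemma12} with the eigenvalue interlacing of Lemma \ref{lemma11}. First I would note that $Q_i$ given by (\ref{eq7}) is an ``arrowhead matrix with its arrow in position $i$'': its $(i,i)$ entry is $\frac{n-2}{c_i^2}$, its $(j,j)$ entry is $\frac{1}{c_j^2}$ for $j\neq i$, the entries $(i,j)$ and $(j,i)$ equal $-\frac{1}{c_ic_j}$, and all remaining entries vanish. Hence there is a permutation matrix $P$ (compose the transposition exchanging coordinates $1$ and $i$ with a permutation of the other coordinates that sorts the diagonal block) such that $P^{T}Q_iP$ has exactly the arrowhead form (\ref{eqq1}), with $c_1$ replaced by $c_i$, the other $c_k$ permuted, and diagonal block $B=\text{diag}\{b_1,\dots,b_{n-1}\}$ ordered so that $b_1\ge b_2\ge\cdots\ge b_{n-1}$. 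Since $P^{T}Q_iP$ is congruent (indeed similar) to $Q_i$, Sylvester's law of inertia gives $\text{inertia}(Q_i)=\text{inertia}(P^{T}Q_iP)$, so it suffices to treat this canonical arrowhead matrix, which I still call $Q_1$.

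For this $Q_1$ the diagonal block entries are $b_k=1/c_j^2$ for the appropriate $j\ge 2$, all strictly positive, so $b_{n-1}=\min_{k\ge 2}1/c_k^2>0$. The interlacing chain (\ref{eqn:arr}) of Lemma \ref{lemma11}, namely $\lambda_1\ge b_1\ge\lambda_2\ge b_2\ge\cdots\ge b_{n-1}\ge\lambda_n$, then yields $\lambda_{n-1}\ge b_{n-1}>0$, and hence $\lambda_1\ge\lambda_2\ge\cdots\ge\lambda_{n-1}>0$. Thus $Q_1$ has at least $n-1$ strictly positive eigenvalues, and in particular at most one of its eigenvalues is non-positive.

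Finally I would invoke Lemma \ref{lemma12}: $\det(Q_1)=-\prod_{k=1}^{n}1/c_k^2<0$. Since the determinant is the product of the $n$ eigenvalues and the first $n-1$ of them are positive, the remaining eigenvalue $\lambda_n$ must be strictly negative; it cannot vanish (that would force $\det(Q_1)=0$) and cannot be positive (that would force $\det(Q_1)>0$). Therefore $Q_1$ has exactly $n-1$ positive eigenvalues, no zero eigenvalue, and exactly one negative eigenvalue, i.e. $\text{inertia}(Q_1)=\{n-1,0,1\}$, and by the reduction of the first paragraph the same holds for every $Q_i$. As an alternative to the determinant step, the third relation in (\ref{cond}) gives $c^{T}Q_ic=-\beta<0$ with $c\neq 0$, so $Q_i$ is not positive semidefinite and has at least one negative eigenvalue, which together with the $n-1$ positive eigenvalues from interlacing already pins down the inertia.

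I do not expect a genuine obstacle here; the only points that need mild care are the permutation reduction to the canonical ordered arrowhead shape (and checking that this permutation is a congruence, not merely a relabelling of entries) and the degenerate low-dimensional behaviour — for instance when $n=2$ the $(1,1)$ entry of $Q_1$ is $0$, but $\det(Q_1)=-1/(c_1^2c_2^2)<0$ still holds, so the argument applies verbatim and gives inertia $\{1,0,1\}$.
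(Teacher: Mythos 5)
Your proposal is correct and follows essentially the same route as the paper: reduce to the arrowhead matrix $Q_1$ without loss of generality, use the interlacing property (\ref{eqn:arr}) of Lemma \ref{lemma11} to get $\lambda_{n-1}\geq\min_{j\geq2}c_j^{-2}>0$, and combine with the negative determinant from Lemma \ref{lemma12} to force $\lambda_n<0$. Your extra care with the explicit permutation congruence and the alternative argument via $c^TQ_ic=-\beta<0$ are fine additions but not needed beyond what the paper does.
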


\begin{proof}
Without loss of generality, we only consider $Q_1$. According to the
second statement in Lemma \ref{lemma11}, we have
$\lambda_{n-1}\geq\min_{j=2,...,n}\{c_j^{-2}\}>0.$ Also, the
determinant of $Q_i$ is negative according to Lemma \ref{lemma12},
thus we have that $\lambda_n$ is negative. This proof is complete.
\end{proof}

\begin{lemma}
A lower bound and an upper bound of the eigenvalues of $Q_i$ given as (\ref{eq7}) are 
\begin{equation}\label{eq22}
\frac{1}{2}\left(\frac{n-2}{c_i^2}+\frac{1}{c_o^2}+\frac{1}{c_i}
\sqrt{\left(\frac{n-2}{c_i}-\frac{c_i}{c_o^2}\right)^2+4\sum_{j=1,j\neq
i}^n\frac{1}{c_j^2}}\right),
\end{equation}
and 
\begin{equation}\label{eq12}
\frac{1}{2}\left(\frac{n-2}{c_i^2}+\frac{1}{c_*^2}+\frac{1}{c_i}
\sqrt{\left(\frac{n-2}{c_i}-\frac{c_i}{c_*^2}\right)^2+4\sum_{j=1,j\neq
i}^n\frac{1}{c_j^2}}\right),
\end{equation}
respectively, where $c_o=\max_{j=1,j\neq i}^n\{c_j\}$ and $c_*=\min_{j=1,j\neq
i}^n\{c_j\}$.
\end{lemma}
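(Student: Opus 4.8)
The plan is to sandwich $Q_i$ in the Loewner order between two arrowhead matrices whose lower-right $(n-1)\times(n-1)$ block is a scalar multiple of the identity, because such matrices have an explicitly computable largest eigenvalue, and the two expressions in the statement are exactly those largest eigenvalues. I read the claim as saying that (\ref{eq22}) is a lower bound and (\ref{eq12}) an upper bound for the largest eigenvalue $\lambda_1(Q_i)$ --- the only eigenvalue of $Q_i$ not already pinned down by the interlacing in Lemma \ref{lemma11}.

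First I would reduce to $i=1$: conjugating $Q_i$ by the permutation matrix that transposes coordinates $1$ and $i$ is a similarity, hence preserves the spectrum, and turns $Q_i$ into a matrix of the form (\ref{eqq1}) with the roles of $c_1$ and $c_i$ interchanged; the quantities $c_o=\max_{j\ne i}c_j$ and $c_*=\min_{j\ne i}c_j$ are unchanged. So assume $Q_i=Q_1$ as in (\ref{eqq1}). Next, let $\widehat Q$ be the matrix obtained from $Q_1$ by replacing each lower-right diagonal entry $1/c_j^2$ ($j\ge 2$) by $1/c_o^2$, and $\widetilde Q$ the one obtained by replacing each such entry by $1/c_*^2$, in both cases leaving the first row and column untouched. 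Since $1/c_o^2\le 1/c_j^2\le 1/c_*^2$ for all $j\ge 2$, the differences $Q_1-\widehat Q$ and $\widetilde Q-Q_1$ are diagonal with nonnegative entries, hence positive semidefinite; thus $\widehat Q\preceq Q_1\preceq\widetilde Q$, and by monotonicity of the top eigenvalue under the Loewner order (Weyl's inequalities, see \cite{horn}) we get $\lambda_1(\widehat Q)\le\lambda_1(Q_1)\le\lambda_1(\widetilde Q)$.

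It then remains to evaluate $\lambda_1(\widehat Q)$ and $\lambda_1(\widetilde Q)$. Write $\widehat Q$ as an arrowhead matrix with corner entry $\alpha=(n-2)/c_1^2$, lower-right block $b\,I_{n-1}$ with $b=1/c_o^2$, and arrow vector $p$ having $p_j=-1/(c_1c_j)$, so that $\|p\|^2=\tfrac1{c_1^2}\sum_{j\ne 1}1/c_j^2$. Passing to an orthonormal basis of the lower-right coordinate space whose first vector is $p/\|p\|$ block-diagonalizes $\widehat Q$ into a $2\times2$ block with rows $(\alpha,\|p\|)$ and $(\|p\|,b)$ together with the scalar $b$ repeated $n-2$ times; hence the eigenvalues of $\widehat Q$ are $b$ and $\tfrac12\bigl((\alpha+b)\pm\sqrt{(\alpha-b)^2+4\|p\|^2}\bigr)$. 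Since $\sqrt{(\alpha-b)^2+4\|p\|^2}\ge|\alpha-b|$, the ``$+$'' root is at least $\max(\alpha,b)\ge b$, so it equals $\lambda_1(\widehat Q)$; substituting the values of $\alpha$, $b$, $\|p\|$ and using $\tfrac1{c_1^2}\bigl(\tfrac{n-2}{c_1}-\tfrac{c_1}{c_o^2}\bigr)^2=(\alpha-b)^2$ reproduces exactly (\ref{eq22}). The same computation with $b=1/c_*^2$ gives $\lambda_1(\widetilde Q)=$ (\ref{eq12}), which finishes the proof.

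The argument is essentially bookkeeping, and the one place that needs care is the last paragraph: checking that the claimed closed form really is the \emph{largest} among the $n-1$ eigenvalues of $\widehat Q$ (that is, the inequality $\tfrac12\bigl((\alpha+b)+\sqrt{(\alpha-b)^2+4\|p\|^2}\bigr)\ge b$), and that the algebra reproduces (\ref{eq22}) and (\ref{eq12}) verbatim rather than merely up to rearrangement. If one prefers to avoid the Loewner-order step, one can instead note that $\widehat Q$, $Q_1$, $\widetilde Q$ are all arrowhead matrices and compare their characteristic polynomials (\ref{eq11}) as functions of $\lambda$, but the positive-semidefinite-difference route is the shortest.
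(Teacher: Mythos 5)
Your argument is internally correct and is a genuinely different route from the paper's. The paper fixes $i=1$, writes $Q_1=f(x)+g(x)$ with $f(x)$ diagonal and $g(x)$ the ``arrow'' part carrying a free parameter $x$, applies Weyl's inequalities $\lambda_1(Q_1)\le\lambda_1(f(x))+\lambda_1(g(x))$, $\lambda_n(Q_1)\ge\lambda_n(f(x))+\lambda_n(g(x))$, and then optimizes over $x$ through the four cases (\ref{c3})--(\ref{c2}), the optima sitting at $x=n-2-c_1^2/c_*^2$ and $x=n-2-c_1^2/c_o^2$. You instead sandwich $Q_1$ in the Loewner order between two arrowhead matrices $\widehat Q\preceq Q_1\preceq\widetilde Q$ whose lower-right blocks are scalar, and compute their extreme eigenvalues exactly; this avoids the case analysis, makes it transparent that the closed forms are exact eigenvalues of comparison matrices, and reproduces the paper's upper bound (\ref{eq12}) verbatim.

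There is, however, a mismatch in what is being bounded, and you should flag it rather than silently re-interpret the lemma. The paper's proof bounds the \emph{smallest} eigenvalue $\lambda_n(Q_i)$ from below: substituting $x=n-2-c_1^2/c_o^2$ into (\ref{c2}) yields
\begin{equation*}
\frac{1}{2}\left(\frac{n-2}{c_1^2}+\frac{1}{c_o^2}-\frac{1}{c_1}
\sqrt{\left(\frac{n-2}{c_1}-\frac{c_1}{c_o^2}\right)^2+4\sum_{j=2}^n\frac{1}{c_j^2}}\right),
\end{equation*}
i.e.\ (\ref{eq22}) with a \emph{minus} sign before the radical; the plus sign printed in (\ref{eq22}) must be a typo, since a positive number cannot lie below the negative eigenvalue guaranteed by inertia$(Q_i)=\{n-1,0,1\}$. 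You resolved the inconsistency the other way, reading (\ref{eq22}) as a lower bound on $\lambda_1(Q_i)$ alone; that statement is true and your proof of it is sound, but it says nothing about $\lambda_n$, which (like $\lambda_1$) is left unbounded on one side by the interlacing of Lemma \ref{lemma11} and is precisely what the paper's proof targets. The good news is that your sandwich delivers the intended bound at no extra cost: $\widehat Q\preceq Q_1$ also gives $\lambda_n(\widehat Q)\le\lambda_n(Q_1)$, and $\lambda_n(\widehat Q)$ is the ``$-$'' root $\tfrac12\bigl((\alpha+b)-\sqrt{(\alpha-b)^2+4\|p\|^2}\bigr)$ (it is at most $\min(\alpha,b)\le b$), which is exactly the corrected (\ref{eq22}). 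Adding that one line would make your argument prove both the statement you chose and the statement the paper actually establishes.
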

\begin{proof}
Without loss of generality,  we only consider $Q_1$ given as (\ref{eqq1}). In our proof, 
we let $Q_1$ be divided into two functions, $Q_1=f(x)+g(x), $ where $x\in
\mathbb{R}$, $f(x),$ and $g(x)$ are as follows:
\begin{equation}
f(x)=\frac{n-2-x}{c_1^2}E_{11}+\sum_{j=2}^n \frac{1}{c_j^2}E_{jj},
\text{ and }
g(x)=\frac{x}{c_1^2}E_{11}+\sum_{j=2}^n\frac{-1}{c_1c_j}(E_{ij}+E_{ji}).
\end{equation}
For any $x\in \mathbb{R},$ we have $\lambda_n(Q_1)\geq\lambda_n(f(x))+\lambda_n(g(x))$ and
$\lambda_1(Q_1)\leq \lambda_1(f(x))+\lambda_1(g(x)).$ Hence, 
\begin{equation}
\lambda_1(Q_1)\leq \min_{x\in
\mathbb{R}}\{\lambda_1(f(x))+\lambda_1(g(x))\},
\text{ and } \lambda_n(Q_1)\geq \max_{x\in
\mathbb{R}}\{\lambda_n(f(x))+\lambda_n(g(x))\}.
\end{equation}
Note that  $f(x)$ is a diagonal matrix for any $x\in \mathbb{R}$, then its eigenvalues are 
$\lambda_1(f(x))=\max\{(n-2-x)c_1^{-2},c_*^{-2}\} \text { and } \lambda_n(f(x))=\min\{(n-2-x)c_1^{-2},c_o^{-2}\}.
$
Note that the rank of $g(x)$ is 1 for any $x\in \mathbb{R},$ and its characteristic polynomial is $
\lambda^{n-2}(\lambda^2-{x}c_1^{-2}\lambda-\sum_{j=2}^n(c_1c_j)^{-2}).
$ Thus, we have
\begin{equation}
\lambda_1(g(x))=\frac{1}{2}\left(\frac{x}{c_1^{2}}+\sqrt{\left(\frac{x}{c_1^{2}}\right)^2+4\sum_{j=2}^n\left(\frac{1}{c_1c_j}\right)^{2}}\right),
\end{equation}
and
\begin{equation}
\lambda_n(g(x))=\frac{1}{2}\left(\frac{x}{c_1^{2}}-\sqrt{\left(\frac{x}{c_1^{2}}\right)^2+4\sum_{j=2}^n\left(\frac{1}{c_1c_j}\right)^{2}}\right).
\end{equation}

We now consider the following four cases:

\begin{enumerate}
\item If $\frac{n-2-x}{c_1^2}\geq\frac{1}{c_*^2}$, {i.e.,} $x\leq
n-2-\frac{c_1^2}{c_*^2}$, then

\begin{equation}\label{c3}
\lambda_1(f(x))+\lambda_1(g(x))=\frac{n-2}{c_1^2}+\frac{2}{c_1}\left(\frac{\sum_{i=2}^n\frac{1}{c_i^2}}{\frac{x}{c_1}+\sqrt{\frac{x^2}{c_1^2}+4\sum_{i=2}^n\frac{1}{c_i^2}}}\right).
\end{equation}

  \item If $\frac{n-2-x}{c_1^2}\leq\frac{1}{c_*^2}$, {i.e.} $x\geq
n-2-\frac{c_1^2}{c_*^2}$, then
\begin{equation}\label{c4}
\lambda_1(f(x))+\lambda_1(g(x))=\frac{1}{c_*^2}+\frac{1}{2c_1}\left(\frac{x}{c_1}+\sqrt{\frac{x^2}{c_1^2}+4\sum_{i=2}^n\frac{1}{c_i^2}}\right).
\end{equation}

  \item If $\frac{n-2-x}{c_1^2}\geq\frac{1}{c_o^2}$, {i.e.,} $x\leq
n-2-\frac{c_1^2}{c_o^2}$, then
\begin{equation}\label{c1}
\lambda_n(f(x))+\lambda_n(g(x))=\frac{1}{c_o^2}-\frac{2}{c_1}\frac{\sum_{j=2}^n\frac{1}{c_j^2}}
{\frac{x}{c_1}+\sqrt{\frac{x^2}{c_1^2}+4\sum_{j=2}^n\frac{1}{c_j^2}}}.
\end{equation}

  \item If $\frac{n-2-x}{c_1^2}\leq\frac{1}{c_o^2}$, {i.e.,} $x\geq
n-2-\frac{c_1^2}{c_o^2}$, then 
\begin{equation}\label{c2}
\lambda_n(f(x))+\lambda_n(g(x))=\frac{n-2}{c_1^2}-\frac{1}{2c_1}\left(\frac{x}{c_1}+\sqrt{\frac{x^2}{c_1^2}+4\sum_{i=2}^n\frac{1}{c_i^2}}\right).
\end{equation}

\end{enumerate}
Since functions  (\ref{c1}) and (\ref{c4})
are increasing functions and  functions (\ref{c2}) and
(\ref{c3}) are decreasing functions, we have
\begin{equation}
arg \min_{x\in
\mathbb{R}}\{\lambda_1(f(x))+\lambda_1(g(x))\}=\{n-2-\frac{c_1^2}{c_*^2}\}
\end{equation}
and 
\begin{equation}
arg \max_{x\in
\mathbb{R}}\{\lambda_n(f(x))+\lambda_n(g(x))\}=\{n-2-\frac{c_1^2}{c_o^2}\}.
\end{equation}
Then substituting $x=n-2-\frac{c_1^2}{c_o^2}$ into (\ref{c1})
(or (\ref{c2})), and substituting $x=n-2-\frac{c_1^2}{c_*^2}$ into (\ref{c3})
(or (\ref{c4})), the lower and upper bounds (\ref{eq22}) and (\ref{eq12}) are immediate. 
This proof is complete. 
\end{proof}

\subsection{{The hyperplane is $\mathcal{H}=\mathcal{H}(e,
e^Tc)=\{x\in \mathbb{R}^n|e^Tx=c^Te\}.$}}

First of all, we compute an orthogonal
complementary basis of $e$. Obviously,
$\{e_1-e_2,e_1-e_3,...,e_1-e_n\}$ is a basis of the complementary
space of $e.$ We use this basis to construct an orthogonal complementary basis of $e$ given as follows:
\begin{equation}
h_i=-\frac{1}{\sqrt{i(i+1)}}\sum_{k=1}^ie_k+\frac{\sqrt{i}}{\sqrt{i+1}}e_{i+1},
\text{~~where~~} i=1,2,...,n-1,
\end{equation}
which can be explicitly written as  
\begin{equation}
H=[h_1,h_2,...,h_{n-1}]=\left[
                          \begin{array}{rrrr}
                            -\frac{1}{\sqrt{2}} & -\frac{1}{\sqrt{6}} & -\frac{1}{\sqrt{12}} & \cdots \\
                             \frac{1}{\sqrt{2}} & -\frac{1}{\sqrt{6}} & -\frac{1}{\sqrt{12}} & \cdots \\
                                                &  \frac{\sqrt{2}}{\sqrt{3}} & -\frac{1}{\sqrt{12}} & \cdots \\
                                                &                     & \frac{\sqrt{2}}{\sqrt{3}} & \cdots \\
                             &  &  & \ddots \\
                          \end{array}
                        \right].
\end{equation}
For simplicity, we denote the $i$th row of $H$ by $p_i$,
where $i=1,2,...,n.$ Without loss of generality, assume $i>j$, 
we have
\begin{equation}
p_i^Tp_j=-\frac{1}{{i}}+\sum_{k=i+1}^n\frac{1}{(k-1)k}=-\frac{1}{n},
\text{ and }
p_i^Tp_i=\frac{i-1}{{i}}+\sum_{k=i+1}^n\frac{1}{(k-1)k}=1-\frac{1}{n}.
\end{equation}
Thus, we have
\begin{equation}\label{eq23}
HH^T=I-\frac{1}{n}ee^T.
\end{equation}
Since $(HH^T)H=H$, {i.e.,} $(HH^T)h_i=h_i$, for $i\in\mathcal{I}(n-1),$ we have that $h_i$ is an eigenvector of $HH^T.$
Also, note that the rank of $HH^T$ is $n-1$, thus the eigenvalues of
$HH^T$ is 1 with multiplier $n-1$ and 0 with multiplier 1.

\begin{theorem}
Let a Dikin ellipsoid $\mathcal{E_D}$ be in the form of
(\ref{eq31}), and a hyperplane be $\mathcal{H}=\mathcal{H}(e,
e^Tc)$. Then the Lorenz cone $\mathcal{C_L}$ generated by the origin
and $\mathcal{H}\cap\mathcal{E_D}$ is represented as
$\mathcal{C_L}=\{x|x^TQx\leq0\}$, and
\begin{equation}\label{eq26}
Q=D+\frac{n-1}{(e^Tc)^2}ee^T-\frac{1}{e^Tc}(ec^TD+Dce^T)=D+\sum_{i=1}^n\sum_{j=1}^n\left(\frac{n-1}{(e^Tc)^2}-\frac{1}{e^Tc}\left(\frac{1}{c_i}+\frac{1}{c_j}\right)\right)E_{ij}.
\end{equation}
\end{theorem}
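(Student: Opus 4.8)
The plan is to derive the formula for $Q$ as a direct specialization of the master formula in Theorem~\ref{themmain}, taking advantage of the orthonormal basis $H$ constructed just above (for which $H^TH=I_{n-1}$, so $\tilde H=HH^T=I-\frac1n ee^T$) and the specific normal vector $a=e$. First I would record the relevant ingredients for substitution into (\ref{eq6}): here $a=e$, so $c^Ta=e^Tc$; $aa^T=ee^T$; and since $H^TH=I_{n-1}$ we have $\tilde H=HH^T=I-\frac1n ee^T$ by (\ref{eq23}). The key simplification is to compute $\tilde D=D-(D\tilde H+\tilde H D)$ and the cross term $a c^TD\tilde H+\tilde H Dc a^T$ using $\tilde H=I-\frac1n ee^T$ and the fact that $e$ is, in the relevant sense, annihilated by $H^T$ (equivalently $\tilde H e=0$, since $HH^Te=(I-\frac1n ee^T)e=e-e=0$).

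Next I would carry out the substitution. For the middle term of (\ref{eq6}), I need $c^T\tilde D c = c^TDc - 2c^TD\tilde H c$; using $\tilde H=I-\frac1n ee^T$, $c^TD\tilde Hc=c^TDc-\frac1n (e^Tc)(c^TDe)$... actually the cleaner route is to note $D$ is diagonal so $c^TDc=\sum c_i^2\cdot c_i^{-2}=n$ (recall $D=C^{-2}$ with $C=\mathrm{diag}\{c_i\}$), $Dc=C^{-2}c=(1/c_1,\dots,1/c_n)^T$, and $c^TDe=e^TDc=\sum 1/c_i$. Then $ac^TD\tilde H+\tilde H Dca^T = e(Dc)^T\tilde H+\tilde H(Dc)e^T$, and since $\tilde H e=0$ one gets $e(Dc)^T\tilde H = e(Dc)^T - \frac1n e(Dc)^Tee^T = e(Dc)^T - \frac{1}{n}(e^TDc)\,ee^T$ — wait, $(Dc)^T\tilde H = (Dc)^T(I-\frac1n ee^T)=(Dc)^T-\frac1n(e^TDc)e^T$, and multiplying on the left by $e$ gives $e(Dc)^T - \frac1n(e^TDc)ee^T$. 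Plugging these into (\ref{eq6}) and collecting the $ee^T$ coefficients (from the $\frac{1+c^T\tilde Dc}{(c^Ta)^2}aa^T$ term and the two $-\frac1n(e^TDc)ee^T$ contributions) should, after choosing the free positive scalar $\gamma=1$, collapse to exactly $Q=D+\frac{n-1}{(e^Tc)^2}ee^T-\frac1{e^Tc}(ec^TD+Dce^T)$. The entrywise form in (\ref{eq26}) then follows since $ec^TD+Dce^T$ has $(i,j)$ entry $\frac1{c_j}+\frac1{c_i}$ and $ee^T$ has all entries $1$.

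The main obstacle — really the only place where care is required — is the bookkeeping of the $ee^T$-type terms: the formula (\ref{eq6}) produces an $aa^T=ee^T$ term with coefficient $-\frac{1+c^T\tilde Dc}{(e^Tc)^2}$, and the cross-term expansion produces additional $ee^T$ pieces with coefficient involving $\frac1n e^TDc$, and one must verify that all the $\frac1n$-terms and the $c^T\tilde Dc$ pieces conspire so that the net coefficient of $ee^T$ is precisely $\frac{n-1}{(e^Tc)^2}$ while the $e(Dc)^T$ and $(Dc)e^T$ pieces survive with coefficient $-\frac1{e^Tc}$ and no leftover. I expect the computation $c^T\tilde Dc = n - 2(n - \frac1n(e^Tc)(e^TDc))$... to be the delicate step; concretely $c^TD\tilde Hc = c^TDc - \frac1n(c^Te)(e^TDc) = n - \frac1n(e^Tc)(\sum 1/c_i)$, so $c^T\tilde Dc = n - 2n + \frac2n(e^Tc)(\sum 1/c_i) = -n+\frac2n(e^Tc)(\sum 1/c_i)$; then $1+c^T\tilde Dc = 1-n+\frac2n(e^Tc)(\sum 1/c_i)$, and dividing by $(e^Tc)^2$ and combining with $+\frac2{n e^Tc}(\sum 1/c_i)\cdot\frac{1}{e^Tc}$-type terms coming from the cross term should cancel the $\frac2n(e^Tc)(\sum1/c_i)$ contributions, leaving $\frac{1-n}{(e^Tc)^2}\cdot(-1)=\frac{n-1}{(e^Tc)^2}$ after the overall sign in (\ref{eq6}). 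Once this cancellation is confirmed, the rest is immediate.

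Alternatively, if the algebra via (\ref{eq6}) proves unwieldy, I would fall back to verifying the three defining conditions (\ref{cond}) directly for the candidate $Q$ in (\ref{eq26}): check $H^TDH=H^TQH$ (equivalently $H^T(Q-D)H=0$, which holds because $Q-D$ is a combination of $ee^T$, $ec^TD$, $Dce^T$ and $H^Te=0$ kills the first two while... one needs $H^TDce^TH=0$, true since $e^TH=0$), then check $c^TQH=0$ and $c^TQc=-1$ by direct diagonal computation using $c^TDc=n$ and $e^TDc=\sum1/c_i$. By Lemma~\ref{lemma2} and the uniqueness in Theorem~\ref{themmain} (up to positive scaling), this pins down $Q$.
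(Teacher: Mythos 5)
Your proposal is correct and follows essentially the same route as the paper: specialize Theorem~\ref{themmain} with $a=e$ and $\tilde H=HH^T=I-\frac{1}{n}ee^T$, compute $1+c^T\tilde Dc=1-n+\frac{2}{n}(e^Tc)\sum_i c_i^{-1}$ and the cross term $ec^TD\tilde H+\tilde HDce^T=ec^TD+Dce^T-\frac{2}{n}\bigl(\sum_i c_i^{-1}\bigr)ee^T$, and observe that the $\frac{2}{n(e^Tc)}\bigl(\sum_i c_i^{-1}\bigr)ee^T$ contributions cancel, leaving exactly (\ref{eq26}). The cancellation you flag as the delicate step is precisely the computation the paper carries out in (\ref{eq24})--(\ref{eq25}).
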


\begin{proof}
According to (\ref{eq23}), we have
$\tilde{H}=HH^T=I-\frac{1}{n}ee^T$, which yields $
D\tilde{H}=D-\frac{1}{n}Dee^T, $ and $\tilde{H}D=D-\frac{1}{n}ee^TD
$. Thus $
\tilde{D}=D-(D\tilde{H}+\tilde{H}D)=\frac{1}{n}(Dee^T+ee^TD)-D. $
Then we have
\begin{equation}\label{eq24}
1+c^T\tilde{D}c=1+\frac{1}{n}(c^TDee^Tc+c^Tee^TDc)-c^TDc=1-n+\frac{2\alpha}{n}\sum_{i=1}^n\frac{1}{c_i}.
\end{equation}
\begin{equation}\label{eq25}
\begin{split}
ec^TD\tilde{H}+\tilde{H}Dce^T&=ec^TD-\frac{1}{n}e(c^TDe)e^T+Dce^T-\frac{1}{n}e(e^TDc)e^T\\
                             &=ec^TD+Dce^T-\left(\frac{2}{n}\sum_{i=1}^n\frac{1}{c_i}\right)ee^T.
\end{split}
\end{equation}
Substituting (\ref{eq24}) and (\ref{eq25}) into (\ref{eq6}), this
lemma is immediate. The proof is complete.
\end{proof}

We now present an interesting result about the eigenvalue and
eigenvector structures of a class of rank 1 matrix.
\begin{lemma}
If the rank of a square matrix $A$ is $1$, the matrix $A$ has at
most $1$ nonzero eigenvalues with multiplier $1$.
\end{lemma}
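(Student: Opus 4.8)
The plan is to exploit the standard factorization of a rank-one matrix and then read off its eigenstructure directly. First I would observe that since $\mathrm{rank}(A)=1$, the column space of $A$ is one-dimensional; fixing a nonzero vector $u$ spanning it, every column of $A$ is a scalar multiple of $u$, so collecting these scalars into a vector $v$ yields $A=uv^T$ with $u,v$ both nonzero. This representation does essentially all of the work.

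Next I would pin down the eigenvectors associated to nonzero eigenvalues. If $Ax=\lambda x$ with $\lambda\neq 0$, then $x=\lambda^{-1}Ax=\lambda^{-1}(v^Tx)\,u$, so $x\in\mathrm{span}\{u\}$. Hence the union of all eigenspaces belonging to nonzero eigenvalues is contained in the one-dimensional space $\mathrm{span}\{u\}$. Since $Au=(v^Tu)u$, the vector $u$ is itself an eigenvector with eigenvalue $v^Tu$; therefore, if $A$ possesses any nonzero eigenvalue it must equal $v^Tu$ (in particular $v^Tu\neq 0$ in that case), and its eigenspace is exactly $\mathrm{span}\{u\}$, which is one-dimensional. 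This already establishes that there is at most one nonzero eigenvalue and that its geometric multiplicity is $1$.

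To upgrade geometric multiplicity $1$ to algebraic multiplicity $1$, I would compute the characteristic polynomial through principal minors: $\det(\lambda I-A)=\lambda^n-m_1\lambda^{n-1}+m_2\lambda^{n-2}-\cdots+(-1)^n m_n$, where $m_k$ denotes the sum of the $k\times k$ principal minors of $A$. Any $k\times k$ submatrix of $A=uv^T$ has rank at most $1$, so for $k\ge 2$ it is singular and contributes $0$; thus $m_k=0$ for $k\ge 2$ and $\det(\lambda I-A)=\lambda^n-m_1\lambda^{n-1}=\lambda^{\,n-1}\bigl(\lambda-\mathrm{tr}(A)\bigr)=\lambda^{\,n-1}(\lambda-v^Tu)$, using $\mathrm{tr}(uv^T)=v^Tu$. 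Consequently, when $v^Tu\neq 0$ the value $v^Tu$ is a simple root of the characteristic polynomial, i.e.\ has algebraic multiplicity $1$, while when $v^Tu=0$ the matrix has no nonzero eigenvalue at all, so the claim holds in either case.

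The only delicate point is the bookkeeping between the two notions of multiplicity together with the degenerate nilpotent case $v^Tu=0$; I expect the principal-minor formula for the characteristic polynomial to be the cleanest route, as it makes the algebraic-multiplicity statement transparent and avoids any appeal to Jordan form. Beyond stating the reduction $A=uv^T$ carefully, I do not anticipate a genuine obstacle.
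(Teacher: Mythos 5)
Your proof is correct, and it takes a different route from the paper's. The paper argues directly from the kernel: since $\mathrm{rank}(A)=1$, the null space of $A$ has dimension $n-1$, so $0$ is an eigenvalue of geometric (hence algebraic) multiplicity at least $n-1$, leaving room for at most one further eigenvalue, which must equal $\mathrm{tr}(A)$ and may itself be zero (the paper illustrates this degenerate case with $A=E_{12}$). You instead factor $A=uv^T$ and compute the characteristic polynomial exactly via the principal-minor expansion, obtaining $\det(\lambda I-A)=\lambda^{n-1}\bigl(\lambda-v^Tu\bigr)$ because every principal minor of size $k\ge 2$ vanishes for a rank-one matrix; your preliminary eigenvector argument (any eigenvector for $\lambda\neq 0$ lies in $\mathrm{span}\{u\}$) is correct but is in fact subsumed by this computation. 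What your approach buys is an explicit characteristic polynomial, which makes the algebraic-multiplicity claim and the nilpotent case $v^Tu=0$ completely transparent; what the paper's approach buys is brevity, since the bound ``at most one nonzero eigenvalue'' follows from the single observation $\dim\ker A=n-1$ together with the trace identity, without needing the factorization or the minor expansion.
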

\begin{proof}
The dimension of the kernel space of $Ax=0$ is $n-1.$ The basis in
this kernel space can be eigenvalues of $A$ corresponding to $0$.
Also, note that the sum of all eigenvalues is equal to the trace of
$A$ that may be zero, in which case, all eigenvalues are $0$. For
example, $A=E_{12}.$ The eigenvalues are all $0$ with eigenvectors
$e_1,e_2,...,e_n.$
\end{proof}

\begin{corollary}
Let two  vectors $a,c\in \mathbb{R}^n$, then the eigenvalues of
$ac^T$ are $a^Tc$ with multiplier $1$ and $0$ with multiplier $n-1$.
An eigenvector corresponding to $a^Tc$ is $a$. The eigenvectors
corresponding to $0$ can be the complementary basis of $c$.
\end{corollary}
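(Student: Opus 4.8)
The plan is to exploit directly that $ac^T$ has rank at most one, so the rank-one lemma just proved applies, and then to pin down the eigenvectors by explicit substitution. First I would compute the action of $ac^T$ on $a$: since $c^Ta$ is a scalar equal to $a^Tc$, we get $(ac^T)a = a(c^Ta) = (a^Tc)\,a$, which shows that $a$ is an eigenvector with eigenvalue $a^Tc$ (here we use $a\neq0$).

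Next I would observe that every vector $v$ in the complementary space of $c$, i.e. with $c^Tv=0$, satisfies $(ac^T)v = a(c^Tv) = \mathbf{0}$, so the whole $(n-1)$-dimensional subspace $\{v\mid c^Tv=0\}$ lies in the kernel of $ac^T$. Consequently a complementary basis $h_1,\dots,h_{n-1}$ of $c$ consists of eigenvectors for the eigenvalue $0$, which already accounts for multiplicity at least $n-1$.

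To conclude that these exhaust the eigenvalues with the stated multiplicities, I would invoke the preceding lemma: since $\mathrm{rank}(ac^T)\le1$, the matrix has at most one nonzero eigenvalue, of multiplicity one; combining this with the fact that the trace of $ac^T$ equals $c^Ta=a^Tc$ and is the sum of all eigenvalues, it follows that when $a^Tc\neq0$ the spectrum is exactly $a^Tc$ (once) together with $0$ (with multiplicity $n-1$), and then $\{a,h_1,\dots,h_{n-1}\}$ is a full eigenbasis because $a$ is not in $\{v\mid c^Tv=0\}$ in that case.

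There is essentially no hard step here; the only point that needs care is the degenerate case $a^Tc=0$ (likewise $a=\mathbf{0}$ or $c=\mathbf{0}$), in which $ac^T$ is nilpotent, all eigenvalues are $0$, and the two eigenvalues listed in the statement coincide. I would therefore read the corollary under the tacit hypothesis $a^Tc\neq0$ and dispose of the degenerate possibility with a single remark rather than a separate case analysis.
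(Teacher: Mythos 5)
Your proof is correct and follows essentially the route the paper intends: the paper states this as a direct corollary of the preceding rank-one lemma (kernel of dimension $n-1$ plus the trace argument), and you supplement that with the explicit verifications $(ac^T)a=(a^Tc)a$ and $(ac^T)v=\mathbf{0}$ for $c^Tv=0$. Your remark on the degenerate case $a^Tc=0$, where $ac^T$ is nilpotent and the stated eigenbasis degenerates, is a careful addition the paper itself glosses over.
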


\begin{corollary}
Let a nonzero vector $a\in \mathbb{R}^n$, then the matrix $aa^T$ is
rank $1$, and the eigenvalues of $aa^T$ are $\|a\|^2$ with
multiplier $1$ and $0$ with multiplier $n-1$. An eigenvector
corresponding to $\|a\|^2$ is $a$. The eigenvectors corresponding to
$0$ can be the complementary basis of $a$.
\end{corollary}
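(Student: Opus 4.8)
The plan is to derive this as a direct specialization of the preceding corollary together with an immediate verification. First I would observe that $aa^T$ has rank $1$: every column of $aa^T$ equals $a_i a$ for the corresponding index $i$, so the column space is spanned by $a$, hence one-dimensional since $a\neq\textbf{0}$.

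Next I would apply the previous corollary with $c=a$. That corollary asserts that the eigenvalues of $ac^T$ are $a^Tc$ with multiplicity $1$ and $0$ with multiplicity $n-1$, that $a$ is an eigenvector for $a^Tc$, and that a complementary basis of $c$ furnishes eigenvectors for $0$. Substituting $c=a$ gives the eigenvalue $a^Ta=\|a\|^2$ with multiplicity $1$ and $0$ with multiplicity $n-1$, and the complementary basis of $c=a$ is precisely a complementary basis of $a$.

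Finally I would record the direct checks that render the statement self-contained: $(aa^T)a=a(a^Ta)=\|a\|^2 a$ shows that $a$ is an eigenvector for $\|a\|^2$; and if a vector $v$ satisfies $a^Tv=0$, i.e. $v$ lies in the complementary space of $a$, then $(aa^T)v=a(a^Tv)=\textbf{0}$, so $v$ is an eigenvector for $0$. Since that complementary space has dimension $n-1$, any basis $h_1,\dots,h_{n-1}$ of it provides $n-1$ linearly independent eigenvectors for $0$, and together with $a$ they span $\mathbb{R}^n$, which pins down the stated multiplicities.

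There is essentially no obstacle here; the only point requiring care is matching terminology, namely that a ``complementary basis of $a$'' means a basis of the hyperplane $\{x\in\mathbb{R}^n\,|\,a^Tx=0\}$, consistent with the complementary-matrix language of Definition~\ref{defhyp}. Alternatively, one could skip the previous corollary and invoke the rank-$1$ lemma stated just above directly: $aa^T$ has rank $1$, hence at most one nonzero eigenvalue and that of multiplicity $1$, while $\text{tr}(aa^T)=\|a\|^2\neq0$ forces this eigenvalue to equal $\|a\|^2$.
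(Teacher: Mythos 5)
Your proposal is correct and matches the paper's intent: the paper states this as an immediate corollary of the preceding corollary on $ac^T$ (specialized to $c=a$) and the rank-$1$ lemma, giving no separate proof, and your argument is exactly that specialization plus the routine direct checks $(aa^T)a=\|a\|^2a$ and $(aa^T)v=\textbf{0}$ for $a^Tv=0$. The extra verification (and the trace alternative) is harmless and, since $aa^T$ is symmetric hence diagonalizable, fully pins down the multiplicities.
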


\begin{corollary}
The eigenvalues of $ee^T$ is $n$ with multiplier $1$ and eigenvector
$e$,  and $0$ with multiplier $n-1$ and eigenvectors
$e_1-e_2,e_1-e_3,...,e_1-e_n.$
\end{corollary}

\begin{lemma}\label{lemma13}
$\det(\beta I+\alpha ee^T)=(1+n\frac{\alpha}{\beta})\beta^n.$
\end{lemma}
\begin{proof}
Let $T_n=\det(\beta I+\alpha ee^T).$ Writing $T_n$ by definition, we
can find the following iteration formula:
\begin{equation}
T_n=(\beta+\alpha)T_{n-1}+\sum_{j=2}^n (-1)^{1+j}\alpha
M_{1j}=(\beta+\alpha)T_{n-1}-(n-1)\alpha^2\beta^{n-2}.
\end{equation}
Let $F_n=\frac{T_n}{\beta^n},$ then
$F_n=\frac{\beta+\alpha}{\beta}F_{n-1}-(n-1)\frac{\alpha^2}{\beta^2}.$
It is easy to prove that $F_n-F_{n-1}=\frac{\alpha}{\beta}.$ Then
this lemma is immediate.
\end{proof}

\begin{corollary}
$\det(I+\alpha ee^T)=1+n\alpha.$
\end{corollary}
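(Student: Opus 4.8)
The plan is to obtain this as an immediate specialization of Lemma \ref{lemma13}. Setting $\beta=1$ in the identity $\det(\beta I+\alpha ee^T)=\bigl(1+n\tfrac{\alpha}{\beta}\bigr)\beta^n$ gives $\det(I+\alpha ee^T)=(1+n\alpha)\cdot 1^n=1+n\alpha$, which is exactly the claim. So the proof is a single substitution, and there is no genuine obstacle to overcome.

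For robustness I would also record the two alternative derivations, either of which could be used in place of the substitution. First, a spectral argument: by the preceding corollary, $ee^T$ has eigenvalue $n$ with multiplicity $1$ (eigenvector $e$) and eigenvalue $0$ with multiplicity $n-1$; hence $I+\alpha ee^T$ has eigenvalue $1+n\alpha$ once and eigenvalue $1$ with multiplicity $n-1$, and the determinant is the product $\det(I+\alpha ee^T)=(1+n\alpha)\cdot 1^{\,n-1}=1+n\alpha$. Second, a rank-one update argument: since $e^Te=n$, the matrix determinant lemma $\det(I+uv^T)=1+v^Tu$ applied with $u=\alpha e$ and $v=e$ yields $\det(I+\alpha ee^T)=1+\alpha e^Te=1+n\alpha$.

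The only thing worth checking carefully in the chosen route is that Lemma \ref{lemma13} is stated for general $\beta$ with no restriction that forbids $\beta=1$; inspecting its proof (which divides by $\beta$ via $F_n=T_n/\beta^n$) shows $\beta=1$ is perfectly admissible, so the specialization is legitimate. I would therefore write the proof as one sentence invoking Lemma \ref{lemma13} with $\beta=1$, and optionally append the matrix-determinant-lemma remark as an independent confirmation.
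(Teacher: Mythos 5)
Your proof is correct and matches the paper's intent exactly: the corollary is stated immediately after Lemma \ref{lemma13} precisely as the specialization $\beta=1$, which is the one-line substitution you give. The alternative spectral and rank-one-update confirmations are fine but unnecessary.
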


\begin{lemma}
If $c_1=c_2=...=c_n=c,$ then eigenvalues  of $Q$ in (\ref{eq26}) is
$\frac{1}{c^2}$ with multiplier $n-1$ and $-\frac{1}{nc^2}$ with
multiplier 1.
\end{lemma}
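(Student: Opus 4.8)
The plan is to substitute the hypothesis $c_1=c_2=\cdots=c_n=c$ into formula (\ref{eq26}), observe that $Q$ then collapses to a scalar multiple of the identity plus a scalar multiple of $ee^{T}$, and read off the spectrum from the corollary on the eigenstructure of $ee^{T}$ stated above.

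First I would record the simplifications forced by the hypothesis. Writing the center as the vector $c=ce$, we have $D=C^{-2}=\tfrac{1}{c^{2}}I$, $e^{T}c=nc$, and $Dc=\tfrac{1}{c}e$, so that $ec^{T}D=Dce^{T}=\tfrac{1}{c}ee^{T}$. Plugging these into (\ref{eq26}) gives
\begin{equation*}
Q=\frac{1}{c^{2}}I+\frac{n-1}{n^{2}c^{2}}ee^{T}-\frac{2}{nc^{2}}ee^{T}=\frac{1}{c^{2}}I-\frac{n+1}{n^{2}c^{2}}ee^{T},
\end{equation*}
after combining $\tfrac{n-1}{n^{2}c^{2}}-\tfrac{2}{nc^{2}}=-\tfrac{n+1}{n^{2}c^{2}}$; the entrywise form of (\ref{eq26}) produces the same constant.

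Next I would extract the eigenvalues. By the corollary above, $ee^{T}$ has eigenvalue $n$ with eigenvector $e$, and eigenvalue $0$ with multiplier $n-1$ and eigenvectors $e_{1}-e_{2},\dots,e_{1}-e_{n}$; all of these are therefore eigenvectors of $Q$ as well. On $e$ we get $Qe=\bigl(\tfrac{1}{c^{2}}-\tfrac{n+1}{n^{2}c^{2}}\,n\bigr)e=-\tfrac{1}{nc^{2}}e$, and on each $e_{1}-e_{k}$ we get $Q(e_{1}-e_{k})=\tfrac{1}{c^{2}}(e_{1}-e_{k})$. Hence $Q$ has eigenvalue $\tfrac{1}{c^{2}}$ with multiplier $n-1$ and $-\tfrac{1}{nc^{2}}$ with multiplier $1$, which is the assertion; as a sanity check this is consistent with inertia$(Q)=\{n-1,0,1\}$, as required for a Lorenz-cone matrix.

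There is essentially no obstacle here: the only point needing care is the arithmetic of merging the two $ee^{T}$ coefficients. If one prefers to avoid the eigenvector computation, one can instead apply Lemma \ref{lemma13} to $\det(\lambda I-Q)=\det\!\bigl((\lambda-\tfrac{1}{c^{2}})I+\tfrac{n+1}{n^{2}c^{2}}ee^{T}\bigr)$, which evaluates to $\bigl(\lambda-\tfrac{1}{c^{2}}\bigr)^{n-1}\bigl(\lambda+\tfrac{1}{nc^{2}}\bigr)$, giving the same eigenvalues.
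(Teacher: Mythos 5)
Your proposal is correct, and up to the final step it is the same argument as the paper's: both substitute $c=ce$, $D=\tfrac{1}{c^2}I$, $e^Tc=nc$ into (\ref{eq26}) and collapse $Q$ to a multiple of $I$ plus a multiple of $ee^T$. Where you diverge is only in how the spectrum is then read off: you use the eigenstructure of $ee^T$ directly (acting on $e$ and on $e_1-e_k$), whereas the paper applies Lemma \ref{lemma13} to write down the characteristic polynomial $(1-\lambda c^2)^{n-1}\bigl(-\tfrac{1}{n}-\lambda c^2\bigr)=0$; your closing remark shows you see that route too, so the two proofs are interchangeable. One point in your favor: your simplification $Q=\tfrac{1}{c^2}I-\tfrac{n+1}{n^2c^2}ee^T$ is the correct one. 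The paper states $Q=\tfrac{1}{c^2}\bigl(I-\tfrac{n+1}{n}ee^T\bigr)$, which is inconsistent with its own characteristic polynomial and with the lemma's conclusion (it would give the negative eigenvalue $-\tfrac{n}{c^2}$ rather than $-\tfrac{1}{nc^2}$); the denominator there should be $n^2$, exactly as you computed, so your version silently repairs a typo in the published proof.
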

\begin{proof}
Since $c_1=c_2=...=c_n=c,$ we have
$Q=\frac{1}{c^2}(I-\frac{n+1}{n}ee^T).$ According to Lemma
\ref{lemma13}, we have $\det(Q-\lambda I )=0$ to yield the
characteristics polynomial is $(1-\lambda
c^2)^{n-1}(-\frac{1}{n}-\lambda c^2)=0.$ The the lemma is immediate.
\end{proof}


\subsection{\textbf{Tangent cone}}
\begin{lemma}\label{lemma15}
Let $A,B,C,D\in \mathbb{R}^n.$ Assume $AC\bot CB, CD\bot AB$, and
$D\in AB$. Then $ \|AB\|^2=\|AC\|^2+\|BC\|^2,
~~\|AC\|^2=\|AD\|\|AB\|, $ and $ \|BC\|^2=\|BD\|\|AB\|. $
\end{lemma}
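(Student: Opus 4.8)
The plan is to recognize Lemma \ref{lemma15} as the coordinate-free version of the classical right-triangle identities (Pythagoras together with the altitude / geometric-mean relations) and to prove all three formulas at once by pure vector algebra. First I would set $u = C-A$ and $v = B-C$, so that $B-A = u+v$, and note that the hypothesis $AC\bot CB$ says precisely $u^Tv = 0$. Then $\|AB\|^2 = \|u+v\|^2 = \|u\|^2 + 2u^Tv + \|v\|^2 = \|u\|^2 + \|v\|^2 = \|AC\|^2 + \|BC\|^2$, which is the first identity. The degenerate case $A=B$ makes all three statements trivially $0=0$, so one may assume $A\neq B$.

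Next I would use $D\in AB$ to write $D-A = t(B-A)$ for a scalar $t$, so that $\|AD\| = |t|\,\|AB\|$ and $\|BD\| = |t-1|\,\|AB\|$. Substituting $D-C = (D-A)-(C-A) = t(B-A) - u$ into the hypothesis $CD\bot AB$, i.e. $(D-C)^T(B-A)=0$, yields $t\,\|AB\|^2 = u^T(B-A) = u^T(u+v) = \|u\|^2 = \|AC\|^2$, again using $u^Tv=0$. Since $\|AC\|^2\ge 0$ and $\|AB\|^2>0$, this forces $t\ge 0$, hence $|t|=t$ and $\|AD\|\,\|AB\| = t\,\|AB\|^2 = \|AC\|^2$, the second identity. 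Combining this with the first identity gives $(1-t)\|AB\|^2 = \|AB\|^2 - \|AC\|^2 = \|BC\|^2 \ge 0$, so $1-t\ge 0$, $|t-1| = 1-t$, and $\|BD\|\,\|AB\| = (1-t)\|AB\|^2 = \|BC\|^2$, the third identity.

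Every step here is a one-line inner-product expansion, so there is no serious obstacle; the only point that needs a little care is the treatment of the scalar $t$ — namely \emph{deriving} $0\le t\le 1$ from the orthogonality hypotheses (so that $D$ really lies between $A$ and $B$ and the absolute values may be dropped) rather than assuming it. If one prefers to avoid the sign discussion entirely, the third identity also follows from the first two by subtracting and factoring out $\|AB\|$, once the collinearity fact $\|AD\| + \|BD\| = \|AB\|$ is in hand.
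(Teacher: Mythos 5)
Your proof is correct and follows essentially the same route as the paper's: parametrize $D$ along the line through $A$ and $B$, use the orthogonality hypotheses to solve for the parameter ($t=\|AC\|^2/\|AB\|^2$, which matches the paper's $\lambda=\|BC\|^2/\|AB\|^2$ with $\lambda=1-t$), and read off the three identities. The only difference is cosmetic: you derive $0\le t\le1$ and dispose of the degenerate case $A=B$, whereas the paper takes $0\le\lambda\le1$ as part of the meaning of $D\in AB$.
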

\begin{proof}
Note that
$\overrightarrow{AB}=\overrightarrow{CB}-\overrightarrow{CA}$ and
$\overrightarrow{CB}\bot\overrightarrow{CA}$, the first equation is
immediate. The remaining two are not trivial to prove, even they
looks natural in 2 or 3 dimension. Now we present a rigorous proof.

We denote $x_A,x_B,x_C,$ and $x_D$ the coordinates of $A,B,C$ and
$D$ in $\mathbb{R}^n$, respectively. Since $D\in AB$, there exists
$0\leq \lambda \leq 1$ such that $x_D=\lambda x_A+(1-\lambda)x_B.$
Since $AC\bot BC $ and $CD\bot AB$, we have
\begin{equation}\label{eq27}
\left(\lambda x_A+(1-\lambda)x_B-x_C\right)^T(x_A-x_B)=0,~~~~~
(x_A-x_C)^T(x_B-x_C)=0.
\end{equation}
Expanding two equations in (\ref{eq27}) and plugging
$x_A^Tx_B-x_C^Tx_A=x_B^Tx_C-\|x_C\|^2$, which is obtained from the
second equation in (\ref{eq27}), into the first equation in
(\ref{eq27}), we have
\begin{equation}
\lambda=\frac{\|x_B-x_C\|^2}{\|x_A-x_B\|^2}=\frac{\|\overrightarrow{BC}\|^2}{\|\overrightarrow{AB}\|^2}.
\end{equation}
Also, $DA=(1-\lambda)(x_B-x_A)$ and $BA=x_B-x_A$, then
\begin{equation}
\|AD\|\|AB\|=(1-\lambda)(x_B-x_A)^T(x_B-x_A)=(1-\lambda)\|x_A-x_B\|^2=\|\overrightarrow{AB}\|^2-\|\overrightarrow{BC}\|^2=\|\overrightarrow{AC}\|^2.
\end{equation}
Similarly, the third equation in this lemma is easy to obtain.
\end{proof}

\begin{lemma}\label{lemma16}
The distance of a point $\bar{x}\in \mathbb{R}^n$ to a hyperplane
$\mathcal{S}=\{a^Tx=\alpha\}$ is
\begin{equation}
dist(x,S)=\frac{|a^T\bar{x}-\alpha|}{\|a\|}.
\end{equation}
\end{lemma}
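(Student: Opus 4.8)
The plan is to realize the distance as $dist(\bar x,\mathcal S)=\min_{y\in\mathcal S}\|\bar x-y\|$ and to exhibit the minimizer explicitly as the orthogonal projection of $\bar x$ onto $\mathcal S$. First I would introduce the candidate foot of the perpendicular $y^\star=\bar x-ta$ with a scalar $t$ to be determined; imposing the membership condition $a^Ty^\star=\alpha$ gives $a^T\bar x-t\|a\|^2=\alpha$, hence $t=(a^T\bar x-\alpha)/\|a\|^2$ (note $\|a\|\neq 0$, since otherwise $\mathcal S$ is not a hyperplane). Then $\|\bar x-y^\star\|=|t|\,\|a\|=|a^T\bar x-\alpha|/\|a\|$, which already produces the claimed value as an upper bound on the distance.

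For the matching lower bound I would take an arbitrary $y\in\mathcal S$ and write $\bar x-y=(\bar x-y^\star)+(y^\star-y)$. Here $\bar x-y^\star$ is a multiple of $a$, while $y^\star-y$ satisfies $a^T(y^\star-y)=\alpha-\alpha=0$, so the two summands are orthogonal; the Pythagorean identity then yields $\|\bar x-y\|^2=\|\bar x-y^\star\|^2+\|y^\star-y\|^2\geq\|\bar x-y^\star\|^2$, with equality precisely when $y=y^\star$. Combining the two bounds gives $dist(\bar x,\mathcal S)=\|\bar x-y^\star\|=|a^T\bar x-\alpha|/\|a\|$. (If one prefers to bypass the projection, the lower bound is immediate from Cauchy--Schwarz: for every $y\in\mathcal S$, $|a^T\bar x-\alpha|=|a^T(\bar x-y)|\leq\|a\|\,\|\bar x-y\|$, and the value is attained at $y=y^\star$.)

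I do not expect any genuine obstacle here: the argument is elementary and self-contained. The only point deserving a line of care is checking that the displacement between two points of $\mathcal S$ annihilates $a$, which legitimizes the orthogonality used in the Pythagorean step; this is verified above. If desired, one could alternatively deduce the formula from Lemma \ref{lemma15} by identifying $\bar x$ with a vertex, its projection onto $\mathcal S$ with the foot of an altitude, and any other point of $\mathcal S$ with the base, but the direct computation is shorter.
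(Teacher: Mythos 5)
Your proof is correct and complete: the explicit foot of the perpendicular $y^\star=\bar x-\tfrac{a^T\bar x-\alpha}{\|a\|^2}a$ gives the upper bound, and the orthogonal decomposition (or, equivalently, Cauchy--Schwarz) gives the matching lower bound, with the nonvanishing of $a$ duly noted. The paper itself states Lemma \ref{lemma16} without proof, treating it as a standard fact used only as an auxiliary tool in the tangent-cone computation, so there is no argument in the text to compare against; your self-contained derivation is exactly the standard one and fills that omission. (Incidentally, the lemma's displayed formula writes $dist(x,S)$ where it should read $dist(\bar{x},\mathcal{S})$ --- your proof uses the correct quantity.)
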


\begin{theorem}
Assume the ellipsoid $\mathcal{E}=\{x\in \mathbb{R}^n|
(x-c)^T(x-c)\leq 1\},$ then the Lorenz cone $\mathcal{C}$ is
\begin{equation}
Q=I-\frac{1}{\|c\|^2-1}cc^T.
\end{equation}
\end{theorem}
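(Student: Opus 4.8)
The plan is to identify $\mathcal{C}$ as the circular cone with vertex at the origin that circumscribes the radius-one ball $\mathcal{E}=\{x:(x-c)^T(x-c)\le 1\}$, and to read off its defining quadratic form directly. Throughout I assume $\|c\|>1$, i.e.\ the origin lies strictly outside $\mathcal{E}$; otherwise the generated cone is all of $\mathbb{R}^n$ and $\|c\|^2-1\le 0$ renders the asserted formula meaningless. Since $\mathcal{E}$ is convex and misses the origin, the cone it generates from the origin is $\mathcal{C}=\{tx:t\ge 0,\ x\in\mathcal{E}\}$, and a nonzero $x$ lies in $\mathcal{C}$ precisely when the ray $\{sx:s\ge 0\}$ meets $\mathcal{E}$.

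I then translate this ray condition into an inequality. The orthogonal projection of the center $c$ onto the line $\mathrm{span}(x)$ is $\tfrac{c^Tx}{\|x\|^2}x$, so the squared distance from $c$ to this line equals $\|c\|^2-\tfrac{(c^Tx)^2}{\|x\|^2}$. If $c^Tx\le 0$, the point of the ray nearest to $c$ is the origin, at distance $\|c\|>1$, and the ray misses $\mathcal{E}$; if $c^Tx>0$, the ray meets $\mathcal{E}$ iff that distance is $\le 1$. Hence, for $x\neq 0$, $x\in\mathcal{C}$ iff $c^Tx>0$ and $\|c\|^2-\tfrac{(c^Tx)^2}{\|x\|^2}\le 1$; multiplying by $\|x\|^2>0$ gives $(\|c\|^2-1)\,x^Tx-x^Tcc^Tx\le 0$, which also holds with equality at $x=0$, and which, since $\|c\|^2-1>0$, is equivalent to $x^TQx\le 0$ with $Q=I-\tfrac{1}{\|c\|^2-1}cc^T$. (Alternatively one may first locate the apex half-angle $\theta$ of $\mathcal{C}$ from the right triangle formed by the origin, $c$, and a point of tangency: Lemma \ref{lemma15} gives $\sin\theta=1/\|c\|$, hence $\cos^2\theta=(\|c\|^2-1)/\|c\|^2$, and $x\in\mathcal{C}$ becomes $(c^Tx)^2\ge\cos^2\theta\,\|c\|^2\|x\|^2$, leading to the same $Q$.)

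It remains to check that $Q$ is a genuine Lorenz-cone matrix: every vector orthogonal to $c$ is an eigenvector of $Q$ with eigenvalue $1$, while $Qc=\bigl(1-\tfrac{\|c\|^2}{\|c\|^2-1}\bigr)c=-\tfrac{1}{\|c\|^2-1}c$, so $Q$ has eigenvalue $1$ with multiplicity $n-1$ and the single negative eigenvalue $-\tfrac{1}{\|c\|^2-1}$; thus $\mathrm{inertia}(Q)=\{n-1,0,1\}$ and $\mathcal{C}=\{x:x^TQx\le 0\}$ is a Lorenz cone in the sense of Definition \ref{def3} with vertex at the origin, $p=\mathbf{0}$, $\rho=0$, its two branches separated by the sign of $x^TQc=-\tfrac{1}{\|c\|^2-1}c^Tx$, i.e.\ by $c^Tx\ge 0$. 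The one delicate step is the middle paragraph --- converting ``the ray from the origin meets the ball'' into the clean quadratic inequality, in particular correctly handling the two sheets of the quadric (the sign of $c^Tx$) and the degenerate cases $\|c\|\le 1$ and $c^Tx=0$; everything else is routine linear algebra.
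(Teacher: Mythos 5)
Your proof is correct, but it follows a genuinely different route from the paper's. The paper first locates the hyperplane through the circle of tangency: using Lemma \ref{lemma15} (the right-triangle relations) it finds that this hyperplane lies at distance $\|c\|-1/\|c\|$ from the origin, and with Lemma \ref{lemma16} concludes it is $\mathcal{S}=\{x\,|\,c^Tx=\|c\|^2-1\}$; it then writes the cone matrix in the parametrized form $I+\lambda cc^T+cz^TH^T+Hzc^T$ (the representation of Lemma \ref{lemma1}, with the base-matching conditions in the spirit of Lemma \ref{lemma2} and Theorem \ref{themmain}) and solves the resulting equations for $\lambda$, $\mu$, $Hz$ to obtain $Q=I-\frac{1}{\|c\|^2-1}cc^T$. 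You bypass the tangency hyperplane and the parametrization entirely: you characterize membership in the generated cone by whether the ray $\{sx:s\ge0\}$ meets the ball, reduce that to $c^Tx>0$ together with $\|c\|^2-\frac{(c^Tx)^2}{\|x\|^2}\le1$, and read off $(\|c\|^2-1)x^Tx-x^Tcc^Tx\le0$ directly, finishing with the inertia check $\mathrm{inertia}(Q)=\{n-1,0,1\}$. Your argument is more elementary and self-contained, and it makes explicit two points the paper leaves implicit: the standing hypothesis $\|c\|>1$ (without which the formula is meaningless) and the fact that $\{x\,|\,x^TQx\le0\}$ is a two-branch quadric of which the cone generated by $\mathcal{E}$ is the branch with $c^Tx\ge0$. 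What the paper's route buys is uniformity with the rest of the section: determining the cone from its base $\mathcal{E}\cap\mathcal{S}$ via the hyperplane machinery is the same template used for the Dikin-ellipsoid constructions, whereas your distance-to-a-line argument exploits the fact that this particular ellipsoid is a Euclidean ball and would not transfer as directly to a general $\mathcal{E}$.
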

\begin{proof}
Since the ellipsoid $\mathcal{E}$ is a $n$-sphere, the hyperplane
through the intersection of the ellipsoid and Lorenz cone (note that
this intersection is a $n-1$-dimensional ellipsoid) is $\{x\in
\mathbb{R}^n|c^Tx=\alpha\}$. We now compute the value of $\alpha.$
According to Lemma \ref{lemma15}, we obtain that the distance from
origin to this hyperplane is $\|c\|-1/\|c\|.$ Also, according to
Lemma \ref{lemma16}, we can compute $\alpha=\|c\|^2-1.$ Thus, the
hyperplane through $\mathcal{E}\cap \mathcal{C}$ is
\begin{equation}
\mathcal{S}=\{x\in \mathbb{R}^n~|~ c^Tx=\|c\|^2-1\}.
\end{equation}
According to \cite{julio1}, there exist parameters $z\in
\mathbb{R}^{n-1}, \lambda, \mu$, such that
\begin{equation}\label{eq28}
D=I+\lambda cc^T+cz^TH^T+Hzc^T, ~~0=-c-\mu c-(\|c\|^2-1)Hz,
\end{equation}
\begin{equation}
0=\|c\|^2-1-\lambda(\|c\|^2-1)^2+2\mu(\|c\|^2-1).
\end{equation}
From the second and third equation in (\ref{eq28}), we have
\begin{equation}\label{eq29}
Hz=\frac{-1}{\|c\|^2-1}(1+\mu)c, ~~~ \lambda(\|c\|^2-1)-2\mu =1.
\end{equation}
Substitute (\ref{eq29}) into the first equation in (\ref{eq28}), the
theorem is immediate.
\end{proof}


\section{Invariance of Lorenz Cone}

 A tangent cone of a convex set $\mathcal{C}\subset \mathbb{R}^n$ at point $x\in
\mathcal{C}$ is defined as
\begin{equation}
T_{\mathcal{C}}(x)=\{z\in \mathbb{R}^n|\lim_{h\rightarrow0}\inf\frac{dist(x+hz,\mathcal{C})}{h}=0\}
\end{equation}
where $dist(x,\mathcal{C})=\inf_{y\in \mathcal{C}}\|x-y\|$. It is easy to see that
$T_{\mathcal{C}}(x)=\mathbb{R}^n$ if $x$ is located interior of $\mathcal{C}$. If $x$ is on the
boundary of $\mathcal{C}$ with smooth neighborhood, then the tangent cone is
the affine half-space which is obtained by parallelling the tangent
line at $x$ with respect to $\mathcal{C}$ to be through origin.

\begin{lemma}\cite{song1, nagu}
\textbf{[Nagumo]} Let $\mathcal{C}$ be convex and closed set in $R^n$, then
$\mathcal{C}$ is invariant with respect to dynamical system
(\ref{eqn:dy1}) if and only if $\forall x\in \partial \mathcal{C}$, then
$Ax\in T_{\mathcal{C}}(x)$, where $T_{\mathcal{C}}(x)$ is tangent cone of $\mathcal{C}$ at $x$.
\end{lemma}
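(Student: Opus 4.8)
The plan is to prove both implications; necessity is immediate from a first-order expansion, while sufficiency is the substantive part and rests on a Gronwall estimate for the squared distance to $\mathcal{C}$. Write $A=A_c$ and let $x(\cdot)$ be the solution of (\ref{eqn:dy1}); by linearity it exists for all $t\ge0$ and satisfies $x(t+h)=x(t)+hAx(t)+o(h)$.

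\emph{Necessity.} Suppose $\mathcal{C}$ is invariant and fix $x\in\partial\mathcal{C}$. The trajectory through $x$ stays in $\mathcal{C}$, so $x(h)\in\mathcal{C}$ for $h\ge0$, and hence $dist(x+hAx,\mathcal{C})\le\|(x+hAx)-x(h)\|=o(h)$. Dividing by $h$ and letting $h\to0^+$ gives $\liminf_{h\to0}dist(x+hAx,\mathcal{C})/h=0$, i.e. $Ax\in T_{\mathcal{C}}(x)$.

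\emph{Sufficiency.} Assume $Ax\in T_{\mathcal{C}}(x)$ for every $x\in\partial\mathcal{C}$, and let $x(0)\in\mathcal{C}$. Since $\mathcal{C}$ is closed and convex, the metric projection $p(t)=\Pi_{\mathcal{C}}(x(t))$ is well defined and $1$-Lipschitz, so $d(t):=dist(x(t),\mathcal{C})=\|x(t)-p(t)\|$ is Lipschitz and differentiable for a.e. $t$. Using the variational inequality $(x(t)-p(t))^T(y-p(t))\le0$ for all $y\in\mathcal{C}$ (which makes the contribution of $\dot p(t)$ vanish), one obtains for a.e. $t$ that
\[
\tfrac{d}{dt}\,\tfrac12 d(t)^2=(x(t)-p(t))^TAx(t).
\]
When $x(t)\notin\mathcal{C}$, $p(t)\in\partial\mathcal{C}$ and $x(t)-p(t)$ lies in the normal cone $N_{\mathcal{C}}(p(t))$, which for a convex set is the polar cone of $T_{\mathcal{C}}(p(t))$; combined with the hypothesis $Ap(t)\in T_{\mathcal{C}}(p(t))$ this yields $(x(t)-p(t))^TAp(t)\le0$. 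Splitting $Ax(t)=Ap(t)+A(x(t)-p(t))$ and bounding the second term by $\|A\|\,d(t)^2$ gives
\[
\tfrac{d}{dt}\,d(t)^2\le 2\|A\|\,d(t)^2 ,
\]
so by Gronwall $d(t)^2\le d(0)^2e^{2\|A\|t}=0$ for all $t\ge0$; hence $x(t)\in\mathcal{C}$, and invariance follows.

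The main obstacle is the sufficiency direction, concentrated in two technical points. First, showing that $d(t)^2$ is differentiable a.e. with the stated derivative: this uses uniqueness of the projection onto a convex set and the projection's variational inequality to kill the $\dot p(t)$ term (equivalently, an envelope/Danskin argument for $\min_{y\in\mathcal{C}}\|x(t)-y\|^2$). Second, the polarity relation $N_{\mathcal{C}}(p)=T_{\mathcal{C}}(p)^{\circ}$, which is precisely where convexity of $\mathcal{C}$ enters and which converts the geometric sub-tangentiality condition $Ap\in T_{\mathcal{C}}(p)$ into the scalar sign condition $(x-p)^TAp\le0$ that closes the Gronwall estimate. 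Everything else is routine.
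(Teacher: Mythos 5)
Your proof is essentially correct, but note that the paper does not actually prove this lemma: it is quoted from the cited references (Nagumo; see also the survey-style treatment in the invariant-set literature) and is followed only by an informal geometric explanation that the vector field on the boundary ``points back into'' $\mathcal{C}$. So there is no proof in the paper to compare against step by step; what you have written is the standard rigorous argument, and it is sound. Your necessity direction is exactly the expected first-order computation, using $x(h)=x+hAx+o(h)$ and $x(h)\in\mathcal{C}$ to force $\liminf_{h\to0^+}\mathrm{dist}(x+hAx,\mathcal{C})/h=0$. Your sufficiency direction is the classical distance-function/Gronwall argument: the two technical points you flag are indeed the load-bearing ones, and both are handled correctly. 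For the derivative of $d(t)^2$ you can avoid the a.e.\ language entirely by recalling that for a closed convex $\mathcal{C}$ the map $x\mapsto\tfrac12\,\mathrm{dist}(x,\mathcal{C})^2$ is $C^1$ with gradient $x-\Pi_{\mathcal{C}}(x)$, so the chain rule applies at every $t$; and at times with $x(t)\in\mathcal{C}$ the derivative is $0$, so the differential inequality $\tfrac{d}{dt}d^2\le 2\|A\|d^2$ holds for all $t$, not only off $\mathcal{C}$ (worth one sentence, since Gronwall needs the inequality on the whole interval). The polarity step $x(t)-p(t)\in N_{\mathcal{C}}(p(t))=T_{\mathcal{C}}(p(t))^{\circ}$ together with $Ap(t)\in T_{\mathcal{C}}(p(t))$ is exactly where convexity enters, as you say, and it is also where the hypothesis ``only on $\partial\mathcal{C}$'' suffices, since the projection of an exterior point lands on the boundary. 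Compared with the paper's heuristic, your argument has the additional virtue of making explicit that linearity of the system is used only for global existence and the Lipschitz bound $\|A\|$, so the same proof gives Nagumo's theorem for Lipschitz vector fields with $Ax$ replaced by $f(x)$.
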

This is an elegant and intuitive conclusion. We can understand this lemma
in the following way: the right side, i.e. $Ax$ in dynamical system
is actually the slope of tangent line of trajectory at point $x$,
since the left side is the derivative of trajectory. If the
trajectory starts from one point in $\mathcal{C}$, then the unique possibility
to move out from this set must be through some point on the
boundary. This lemma states that the slope is in the tangent cone at
$x$, which forces the trajectory to move back to $\mathcal{C}$.

Based on Nagumo lemma, it is easy to derive the sufficient and
necessary condition that one ellipsoid or cone is positively
invariant with respect to dynamical system (\ref{eqn:dy1}). For simplicity, we  remove the suffix $c$ in $A_c$ in  (\ref{eqn:dy1}). 
\begin{lemma}
Let an ellipsoid be defined as $\mathcal{E}=\{x\in \mathbb{R}^n|x^TPx\leq1\}$, an cone
be defined as $\mathcal{C}=\{x\in \mathbb{R}^n|x^TQx\leq0\}$, then $\mathcal{E}$ or $\mathcal{C}$ is
a positively invariant set with respect to dynamical system if and
only if $\langle Ax,Px\rangle\leq0$ or $\langle Ax,Qx\rangle\leq0$
for $x$ on the boundary of the set, where $\langle a, b\rangle$ is
the inner product of $a$ and $b$.
\end{lemma}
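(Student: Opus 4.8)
The plan is to deduce both equivalences directly from Nagumo's lemma (the lemma immediately preceding this one), so that the whole argument reduces to computing the tangent cones of $\mathcal{E}$ and $\mathcal{C}$ at their boundary points and recognizing them as half-spaces. First I would record the tangent cone of a smooth quadratic sublevel set. Write $\mathcal{E}=\{x\,|\,g(x)\leq0\}$ with $g(x)=x^TPx-1$; since the quadratic form only sees the symmetric part we may take $P=P^T\succ0$, and then $\nabla g(x)=2Px$. At any boundary point $\bar{x}$ (so $\bar{x}^TP\bar{x}=1$) we have $\bar{x}\neq0$, hence $P\bar{x}\neq0$, so the single-inequality constraint qualification holds and the Bouligand tangent cone equals the linearized cone: $T_{\mathcal{E}}(\bar{x})=\{z\in\mathbb{R}^n\,|\,\langle P\bar{x},z\rangle\leq0\}$, i.e. the hyperplane tangent to $\partial\mathcal{E}$ at $\bar{x}$ translated through the origin, exactly as the informal description of the tangent cone in the text predicts. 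Taking $z=A\bar{x}$ gives $A\bar{x}\in T_{\mathcal{E}}(\bar{x})$ iff $\langle A\bar{x},P\bar{x}\rangle\leq0$, and combined with Nagumo's lemma this is precisely the ellipsoid assertion.

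For the cone $\mathcal{C}=\{x\,|\,x^TQx\leq0\}$ I would run the same computation with $h(x)=x^TQx$, $\nabla h(\bar{x})=2Q\bar{x}$. Here $Q$ is nonsingular (its inertia is $\{n-1,0,1\}$), so $Q\bar{x}=0$ forces $\bar{x}=0$; therefore at every boundary point $\bar{x}\neq0$ the gradient is nonzero, $T_{\mathcal{C}}(\bar{x})=\{z\,|\,\langle Q\bar{x},z\rangle\leq0\}$, and $A\bar{x}\in T_{\mathcal{C}}(\bar{x})$ iff $\langle A\bar{x},Q\bar{x}\rangle\leq0$. The vertex $\bar{x}=0$ is handled separately: there $T_{\mathcal{C}}(0)=\mathcal{C}$ itself, while $A\cdot0=0\in\mathcal{C}$ and also $\langle A\cdot0,Q\cdot0\rangle=0\leq0$, so the boundary condition is trivially and consistently satisfied at $0$ and may be dropped. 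Nagumo's lemma then yields the cone assertion.

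The step I expect to need the most care is the invocation of Nagumo's lemma for $\mathcal{C}$: a quadratic cone $\{x\,|\,x^TQx\leq0\}$ with one negative eigenvalue is a double nappe and is not convex, so the lemma (stated for convex closed sets) does not apply verbatim. I would resolve this by working one nappe at a time, each of which is closed and convex; by the symmetry $x\mapsto-x$ of both $\mathcal{C}$ and the flow $e^{At}$, invariance of the full double cone is equivalent to invariance of either nappe, so nothing is lost. The only other point requiring justification is the tangent-cone/constraint-qualification formula for sublevel sets of a smooth function, which is exactly where the nonvanishing of $P\bar{x}$ (respectively $Q\bar{x}$ away from the vertex) enters; everything else is the one-line identity $\frac{d}{dt}\,x(t)^TPx(t)=2\langle Ax(t),Px(t)\rangle$. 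As a Nagumo-free alternative one could instead differentiate $V(t)=x(t)^TPx(t)$ (respectively $x(t)^TQx(t)$) along $x(t)=e^{At}x_0$ and run a first-exit-time argument, using homogeneity of the quadratic forms to upgrade the boundary inequality to $A^TP+PA\preceq0$; I would mention this but present the Nagumo route as the main proof since it matches the development in the paper.
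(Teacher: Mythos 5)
Your proposal is correct and follows essentially the same route as the paper: invoke Nagumo's lemma and identify the tangent cone at a smooth boundary point as the half-space $\{z \mid \langle z, P\bar{x}\rangle \leq 0\}$ (resp.\ $\{z \mid \langle z, Q\bar{x}\rangle \leq 0\}$) determined by the outer normal $2P\bar{x}$ (resp.\ $2Q\bar{x}$). The only difference is that you are more careful than the paper, which dismisses the cone case as ``almost same'': your separate treatment of the vertex $\bar{x}=0$ and of the non-convexity of the double-nappe set (working one nappe at a time, using the symmetry $x\mapsto -x$) fills in details the paper silently omits.
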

\begin{proof} We just prove the ellipsoid case, the proof for cone
is almost same. It is easy to see that the outer normal at $x\in
\partial \mathcal{E}$ is $2Px$, and since the boundary of an ellipsoid is
smooth, the tangent cone at $x$ is
\begin{equation}
T_{\mathcal{E}}(x)=\{y|\langle y, Px\rangle\leq0\}.
\end{equation}
Then by Nagumo lemma, dynamical system (\ref{eqn:dy1}) is positively
invariant on $\mathcal{E}$ if and only if $Ax\in T_{\mathcal{E}}(x)$ for any $x$ on the boundary of
$\mathcal{E}$. Thus, this lemma was proved.
\end{proof}

According to \cite{stern}, it concluded that the dynamical system
(\ref{eqn:dy1}) is positively invariant for the ice cream cone
$\mathcal{C}_0=\{x\in R^n|x_1^2+...+x_{n-1}^2\leq x_n^2\}$ if and only if
there exists $a\in R$ such that
\begin{equation}
Q_nA+A^TQ_n+aQ_n\leq0,
\end{equation}
where $Q_n=diag(1,...,1,-1)$, and the inequality means semi-negative
definite.
\begin{lemma}
Let cone be defined as $\mathcal{C}=\{x\in \mathbb{R}^n|x^TQx\leq0\}$, then dynamical
system is positively invariant on $\mathcal{C}$ if and only if there exists
$a\in \mathbb{R}$ such that
\begin{equation}\label{eqn:a}
QA+A^TQ+aQ\leq0,
\end{equation}
where the inequality means semi-negative definite.
\end{lemma}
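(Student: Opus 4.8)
The plan is to reduce the general centered Lorenz cone $\mathcal{C}=\{x\,|\,x^TQx\leq0\}$ to the standard ice cream cone by a linear change of coordinates, invoke the criterion of \cite{stern} quoted above, and transport the resulting matrix inequality back. Recall from Definition \ref{def3} that $Q$ is symmetric with $\mathrm{inertia}(Q)=\{n-1,0,1\}$; hence, by Sylvester's law of inertia, there is a nonsingular matrix $T\in\mathbb{R}^{n\times n}$ with $T^TQT=\tilde{I}$, where $\tilde{I}=\mathrm{diag}\{1,\dots,1,-1\}$ is precisely the matrix $Q_n$ appearing in the statement of \cite{stern}.

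First I would perform the substitution $x=Ty$. Under it the system $\dot{x}=Ax$ becomes $\dot{y}=\widehat{A}y$ with $\widehat{A}=T^{-1}AT$, and since $e^{\widehat{A}t}=T^{-1}e^{At}T$ and $T$ is a linear isomorphism, $\mathcal{C}$ is positively invariant for $\dot{x}=Ax$ if and only if $T^{-1}\mathcal{C}$ is positively invariant for $\dot{y}=\widehat{A}y$. Moreover $x\in\mathcal{C}$ iff $x^TQx\leq0$ iff $y^T(T^TQT)y=y^T\tilde{I}y\leq0$, so $T^{-1}\mathcal{C}=\{y\,|\,y^T\tilde{I}y\leq0\}=\mathcal{C}_0$, the ice cream cone to which \cite{stern} applies.

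Next I would apply \cite{stern}: $\mathcal{C}_0$ is positively invariant for $\dot{y}=\widehat{A}y$ if and only if there is $a\in\mathbb{R}$ with $\tilde{I}\widehat{A}+\widehat{A}^T\tilde{I}+a\tilde{I}$ negative semidefinite. Substituting $\widehat{A}=T^{-1}AT$ and $\tilde{I}=T^TQT$ and simplifying with $T^{-T}T^T=I=TT^{-1}$, this matrix equals
\[
T^TQAT+T^TA^TQT+aT^TQT=T^T\bigl(QA+A^TQ+aQ\bigr)T.
\]
Because $T$ is nonsingular, the congruence $M\mapsto T^TMT$ preserves negative semidefiniteness in both directions, so the displayed matrix is negative semidefinite if and only if $QA+A^TQ+aQ\leq0$. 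Chaining this with the invariance transfer of the first step gives the lemma.

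The main point requiring care is the first reduction: that positive invariance really does pass through the linear change of variables, and that the criterion of \cite{stern} is being applied to the correct set — the full two-napped cone $\{y\,|\,y^T\tilde{I}y\leq0\}$ rather than a single convex nappe. The former is immediate from $e^{\widehat{A}t}=T^{-1}e^{At}T$; for the latter, note that even if one prefers to derive the criterion from Nagumo's lemma applied to a convex nappe, invariance of the double cone is equivalent to invariance of each nappe, since a continuous trajectory of the linear flow can pass from one nappe to the other only through the origin, and $e^{At}$ being invertible this forces such a trajectory to be identically the origin. Everything else is routine bookkeeping with congruence transformations, so I expect no serious obstacle beyond keeping the transposes straight.
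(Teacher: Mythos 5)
Your proof is correct and follows essentially the same route as the paper: reduce $\mathcal{C}$ to the standard ice cream cone $\mathcal{C}_0$ by a nonsingular congruence ($T^TQT=\tilde{I}$, the paper's $P^TQP=Q_n$), apply the quoted criterion of \cite{stern} to the transformed system, and transport the matrix inequality back, noting that congruence by a nonsingular matrix preserves negative semidefiniteness. Your added remarks on the invariance transfer under the change of variables and on the two-napped cone are careful touches the paper glosses over, but the argument is the same.
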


\begin{proof} 
There exists one nonsingular transformation $P$ such that $\mathcal{C}=P\mathcal{C}_0$. Thus, $\forall x\in \mathcal{C}_0$,
there exists $x^*\in \mathcal{C}$ such that $x^*=Px$. Since $x^*$ satisfies
dynamical system equation, we have
\begin{equation}
(x^*)^{'}=Ax^*\Leftrightarrow (Px)^{'}=APx\Leftrightarrow
x^{'}=P^{-1}APx.
\end{equation}
Thus, dynamical system (\ref{eqn:dy1}) is positively invariant on
$\mathcal{C}$ is equivalent that the right dynamical system is positive
invariant on $\mathcal{C}_0$. By the previous lemma and $P^{T}QP=Q_n$, there
exist $a\in R$ such that
\begin{equation}
\begin{split}
Q_nP^{-1}AP+(P^{-1}AP)^TQ_n+aQ_n&\leq0\\
P^{T}QPP^{-1}AP+P^TA^TPP^{-1}QP+aP^{T}QP&\leq0\\
P^T(QA+A^TQ+aQ)P&\leq0
\end{split}
\end{equation}
In fact, the last ``inequality" is equivalent with (\ref{eqn:a}). If
(\ref{eqn:a}) is true, then for any $x$,
\begin{equation}
x^TP^T(QA+A^TQ+aQ)Px=(Px)^T(QA+A^TQ+aQ)(Px)\leq0.
\end{equation}
The the other hand, for any $x$, since $P$ is singular, there exists
a $y$ such that $P^{-1}x=y$, i.e. $x=Py$. Then
\begin{equation}
x^T(QA+A^TQ+aQ)x=y^TP^T(QA+A^TQ+aQ)Py\leq0.
\end{equation}
The proof is completed. 
\end{proof}

Figure \ref{fig511} gives the shape of the constructed Lorenz cone by using the Dikin ellipsoid and two different hyperplanes. 
\begin{figure}[h]
    \centering
       \includegraphics[width=0.4\textwidth]{./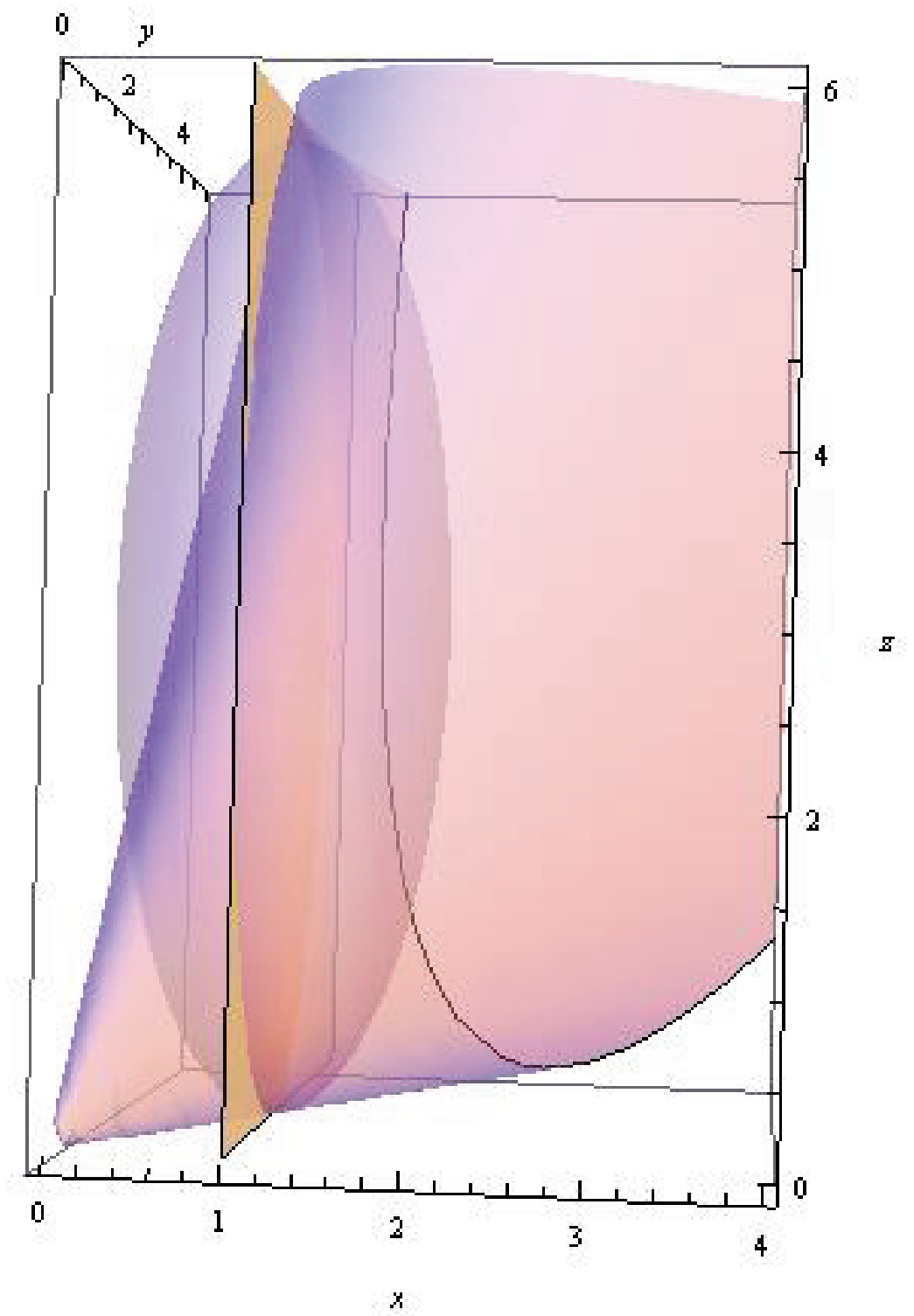}
       \includegraphics[width=0.47\textwidth]{./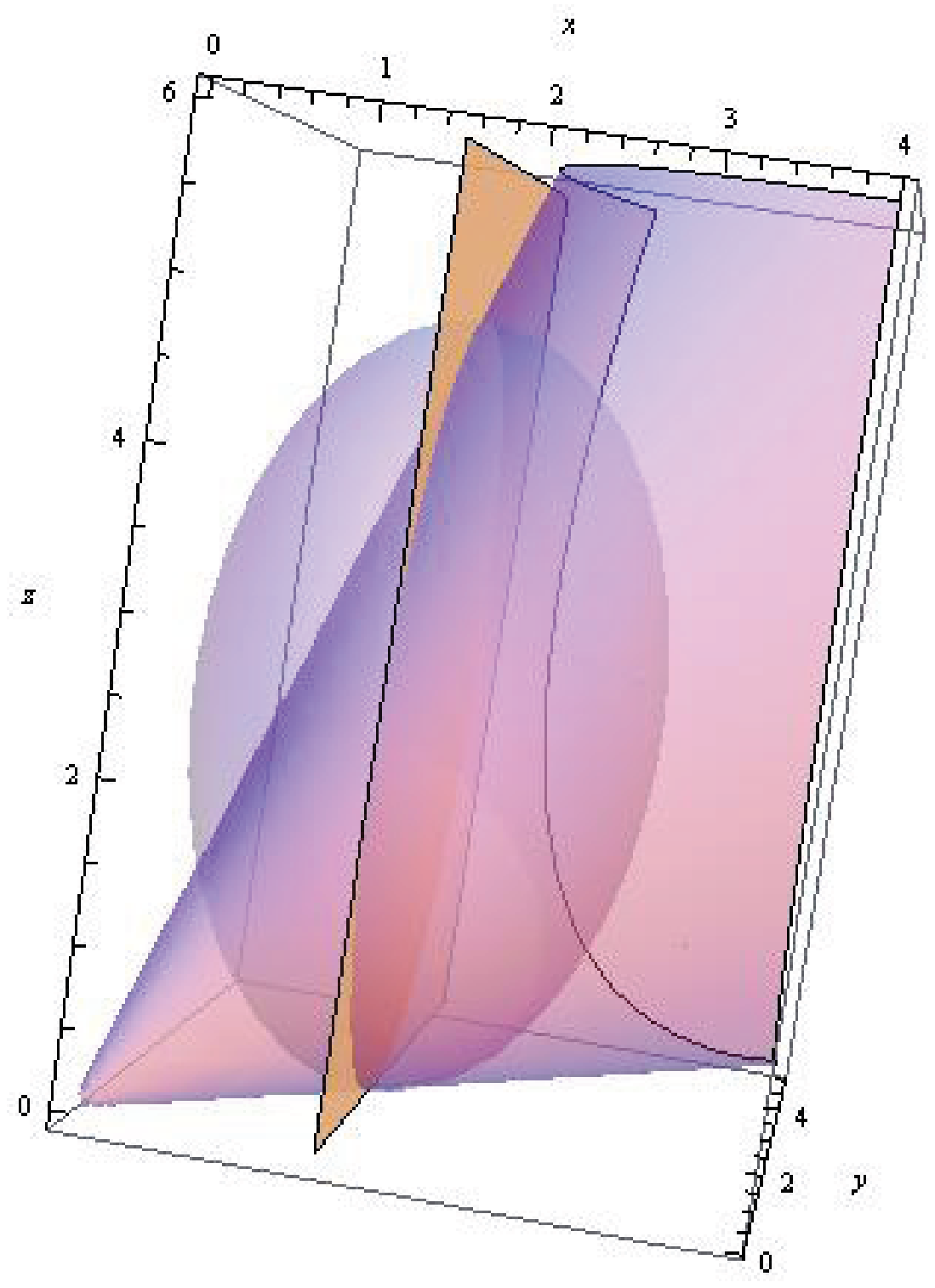}
       \caption{A framework to derive invariance conditions for continuous systems.}
       \label{fig511}
\end{figure}

\section{Conclusion}
In this paper, we study the cone based invariant sets for a given dynamical system. We construct some special Lorenz cones using Dikin ellipsoid and some hyperplanes. Dikin  ellipsoid is originally introduced from mathematical optimization to design the  polynomial time linear algorithm.  We use this tool into the construction to ensure that the constructed Lorenz cone is located in positive orthant, which is usually a common requirement in the real world application. We also study the structure of the constructed cones, especially the eigenvalues structure of the related matrix in the formula of elliposid. The novelty of this paper is building the link between optimization and invariant sets. It also provides more flexibility for other researchers either in theory or in practice to choose more Lorenz cones for analysis.

\bibliographystyle{plain}
\bibliography{myref}

\end{document}